\begin{document}

\newtheorem{thm}{Theorem}[section]
\newtheorem{prop}[thm]{Proposition}
\newtheorem{cor}[thm]{Corollary}
\newtheorem{lemma}[thm]{Lemma}
\newtheorem{rmk}[thm]{Remark}
\newtheorem{const}[thm]{Construction}
\newtheorem{defn}[thm]{Definition}
\newtheorem{exa}[thm]{Example}
\newtheorem*{cmptness_thm}{Compactness Theorem for Finitely Presented Groups}
\newtheorem*{ectcg}{Effective Compactness Theorem for Coxeter groups}

\newcommand\gp[2]{\langle\,{#1}\mid{#2 }\,\rangle}
\newcommand\genby[1]{\langle {#1} \rangle}

\renewcommand\H{\mathbb{H}}
\newcommand{\R} { {\mathbb R} }
\newcommand{\D}{\mathcal D}
\newcommand\Hom{\textnormal{Hom}}
\newcommand\Isom{\textnormal{Isom}}
\newcommand\stab{\textnormal{stab}}
\newcommand\vx{\textnormal{vert}}
\newcommand\edge{\textnormal{edge}}
\newcommand\df{\textnormal{df}}
\newcommand\Thyp{T_{\mathbb{H}}}
\newcommand\Lab{\textnormal{Lab}}
\newcommand\newLab{\overline{\Lab}}
\newcommand\sh{\textnormal{sh}}
\newcommand\Tsh{T_\sh}
\newcommand\spl{\textnormal{spl}}
\newcommand\Tspl{T_\spl}
\newcommand\Labspl{\Lab_\spl}

%

\title{An Effective Compactness Theorem for Coxeter Groups}
\author{Yvonne Lai}
\email{yxl@umich.edu}
\address{Department of Mathematics \\ University of Michigan \\ Ann Arbor MI, 48104}
\thanks{This material is based upon work supported 
by the National Science Foundation Grants DMS-0135345, 
DMS-05-54349, DMS-04-05180, and DMS-0602191.}

\subjclass[2000]{20F65 (Primary) 57M99 (Secondary)}

\begin{abstract}
Through highly non-constructive methods, works by Bestvina, Culler, Feighn, Morgan, Rips, Shalen, and Thurston show that if a finitely presented group does not split over a virtually solvable subgroup, then the space of its discrete and faithful actions on $\H^n$, modulo conjugation, is compact for all dimensions. Although this implies that the space of hyperbolic structures of such groups has finite diameter, the known methods do not give an explicit bound. We establish such a bound for Coxeter groups. We find that either the group splits over a virtually solvable subgroup or there is a constant $C$ and a point in $\H^n$ that is moved no more than $C$ by any generator. The constant $C$ depends only on the number of generators of the group, and is independent of the relators.
\end{abstract}

\date{}

\maketitle

\section*{Introduction}

The space of discrete and faithful actions of a given group $G$ on $\H^n$, up to
conjugation, is a {\it deformation space} of the group.  It is denoted $\D(G,n)$.
In the 1980's, Thurston proved that when a group $G$ is the fundamental group
of an orientable, compact, irreducible, acylindrical 3-manifold with boundary, 
the deformation space $\D(G,3)$ is compact \cite{Thu86}.  To prove this
result, Thurston analysed sequences of ideal triangulations.  

Inspired by Thurston's work and Culler-Shalen's work on varieties
of three-manifold groups \cite{CS83}, Morgan-Shalen reproved Thurston's compactness
theorem using methods from algebraic geometry and geometric topology \cite{MS84} \cite{MS88a} \cite{MS88b}.  
Morgan then showed that when $G$ 
is the fundamental group for a compact, orientable, and irreducible 3-manifold, the space $\D(G,n)$
is compact if and only if the group $G$ does not admit a virtually 
abelian splitting \cite{Mo86}.  This result was pushed to include
all finitely-presented groups using the Rips Machine 
by Bestvina-Feighn,
who state the following Compactness Theorem as a consequence of the
main result of \cite{BF95} concerning actions of trees:
\begin{cmptness_thm}[Thurston, Morgan-Shalen, Morgan, Rips, Bestvina-Feighn]
If $G$ be a finitely-presented group that is not virtually abelian
and does not split over a virtually solvable subgroup, then $\D(G,n)$ 
is compact.  
\end{cmptness_thm}

If a finitely-presented group does not split over a virtually solvable
subgroup, then the Compactness Theorem implies that there is a point
in $\H^n$ that is not moved too far by any generator, for any 
action by the group. 
However, the methods in \cite{BF95} and \cite{Mo86} do
not give an explicit bound.
The technical 
adjective \textit{ineffective} describes such non-constructive results.
In contrast, if a proof is constructive or yields explicit quantities,
then it is termed \textit{effective}.
The main result of this paper gives an estimate for this uniform bound 
in the case of Coxeter groups, in terms of the displacement function.

Given a finite presentation
of a group $G$ with generators $\{g_i\}$, and a representation
$\rho: G\to \Isom(\H^n)$,
we define the displacement function of the
action corresponding to $\rho$
as the ``mini-max'' function
$\inf_{x\in\H^n}\{\max_i d(g_ix, x)\}$.

\begin{ectcg}
Let $G$ be a Coxeter group given by a standard presentation
with $k$ generators, and suppose that $G$ admits an isometric discrete and
faithful action
on $\H^n$.  There exists a function $C_n(k)\in O(k^4)$
so that either $G$ has a virtually solvable special nontrivial splitting or the 
displacement function is bounded above by $C_n(k)$
for every discrete and faithful action of $G$ on $\H^n$.
(We state the function explicitly in
Section \ref{s: small decompositions}.)
\end{ectcg}

This result is related to work by Delzant \cite{Del95} and Barnard \cite{Bar07}.
Delzant \cite{Del95} proved an effective compactness theorem 
for faithful representations of groups to Gromov-hyperbolic groups.  Barnard
\cite{Bar07} proved an effective compactness theorem for surface groups
acting on an arbitrary complete geodesic $\delta$-hyperbolic space, which
generalizes the Mumford Compactness Theorem to $\delta$-hyperbolic spaces.
Both Delzant and Barnard's results rely on the assumption that the injectivity
radius of the group action is bounded from below.  (The result in this
paper does not use such an assumption.)

\subsection*{Summary}
We begin by recalling the definitions and properties related to
Coxeter groups that we use. Section \ref{s: coxeter} reviews 
special subgroups of Coxeter groups
and defines special splittings following Mihalik and Tschantz's visual
decompositions \cite{MT}.

We then discuss the hyperbolic geometry lemmas needed for the result.
In Section \ref{s: bounding midpoints}, we calculate an estimate $\Lambda(\epsilon, R)$
for the length of a geodesic segment in $\H^n$ that guarantees
that the midpoint of the segment is moved at most $\epsilon$ by an involution, if 
the translation distance for the endpoints is bounded above by a constant
$R$. 
In Section \ref{s: qc hull}, we show that the quasi-convex hull of
a finite set $X$ in $\H^n$ is quasi-isometric to the Gromov approximating
tree for $X$, which is an abstract tree. 
In Section \ref{s: shadow}, we construct a projection of
the tree to a collection of geodesic segments in $\H^n$ 
spanning the set $X$, called the ``shadow'' of the
Gromov approximating tree.  To show that the shadow is 
quasi-isometric to a Gromov approximating 
tree of $X$, we use the quasi-isometry from Section \ref{s: qc hull}.

To relate special splittings to the geometry of the action, 
we describe a
combinatorial framework for assigning
labels to the vertices of a tree.
Section \ref{s: labelling systems information} introduces the system by which
we label vertices.
In Section \ref{s: bounds on B}, we use the fixed points of a Coxeter
group action to generate a Gromov approximating tree. 
We apply the labelling system to
the Gromov approximating 
tree to produce splittings of the Coxeter group.  Each edge of
the tree yields a special splitting.  

In Section \ref{s: small decompositions}, 
we combine the above to prove the main result.  
We show that when an edge of the Gromov approximating tree is
sufficiently long, then the splitting produced by an edge
is nontrivial and small.
Given an action $\rho$, we find a lower
bound on the displacement function of $\rho$ that ensures
that the associated Gromov tree contains such a sufficiently long edge.
To do so, we apply the estimate $\Lambda(\epsilon, R)$ obtained in Section \ref{s: bounding midpoints}
to geodesics contained in the shadow of the approximating tree,
setting $\epsilon$ to the Margulis constant for $\H^n$.

\subsection*{Acknowledgments}
I am grateful for the insight and mentorship of my advisor
Misha Kapovich.  I am also thankful for helpful conversations and
encouragement from Angela Barnhill, Dick Canary, Indira Chatterji, 
Thomas Delzant, and Jean Lafont.

\section{Coxeter groups}
\label{s: coxeter}

\subsection{Notation}
We begin by laying out conventions that are used in this paper:
\vskip16pt

\begin{center}
  \begin{tabular}{ll}
   $W$ & Coxeter group \\
   $S$ & generating set for $W$ in a standard presentation \\
   $(W,S)$ & Coxeter system \\ 
   $\Gamma(W,S)$ & Coxeter diagram for system $(W,S)$    
  \end{tabular}
\end{center}

\vskip16pt  
\noindent The terms above are defined in 
Section \ref{s: overview Cox}.

\subsection{Overview of Coxeter groups}
\label{s: overview Cox}

We briefly recall definitions and and properties
of Coxeter groups that are used in the remainder of the paper.
The most relevant Coxeter group properties for this paper are: (1) the generators
in the standard presentations have finite order (see Definition \ref{d: Cox gp}), and (2) each relator
corresponds to a finite subgroup of $W$ (see Remark \ref{r: Cox gp}).
The length of a relator is inconsequential.

\begin{defn}
\label{d: Cox gp}
A group $W$ is a {\em Coxeter group} if
it admits a presentation of the form
  $$\gp{s_1,\ldots,s_g}{(s_is_j)^{m_{ij}}}.$$
where $m_{ij} \leq \infty$ and 
  \begin{itemize}
  \item $m_{ij}=1$ if and only if $i=j$
  \item $m_{ij}\geq 2$ when $i>j$.
  \item $m_{ij}=\infty$ if and only if the element $s_is_j$ has infinite order.
  \end{itemize}
We call this presentation a {\em standard presentation}.

Denote the set of generators $S=\{s_1,\ldots,s_g \}$.  
We call the pair $(W,S)$ a 
{\em Coxeter system} and $S$ a {\em fundamental set of generators}.
We say that the {\em rank} of a Coxeter system is the cardinality of $S$.
\end{defn}  
\begin{rmk}
\label{r: Cox gp}
If $(s_is_j)^{m_{ij}}$ is a relator in a presentation of $W$, then $\genby{s_i,s_j}$
is a finite dihedral group.
\end{rmk}

Later in the paper, we may abbreviate {\em Coxeter system} as {\em system}.

A {\em Coxeter diagram} is a graph that encodes the information
given by a standard presentation of a Coxeter group.
We denote the graph as $\Gamma(W,S)$.  
Its vertices correspond to
the generators bijectively; each vertex is labelled with its corresponding
generator.  An edge exists between two vertices $s_i$ and $s_j$
if and only if $m_{ij}$ is finite and $i\neq j$. This edge is labelled
with the number $m_{ij}$.   Every Coxeter group determines a Coxeter diagram,
a graph whose edges are labelled by positive integers, and any such
graph determines a Coxeter system.

Figure \ref{f: sample Coxeter diagram} shows 
the diagram corresponding
to the reflection group about a hyperbolic quadrilateral
with angles $\pi/2$, $\pi/3$, $\pi/4$, $\pi/4$.  A standard
Coxeter presentation for this group is 
$$\gp{s_1,s_2,s_3,s_4}{s_i^2,(s_1s_2)^2,
                      (s_2s_3)^4,(s_3s_4)^3,(s_4s_1)^4}.$$

\begin{figure}
\includegraphics{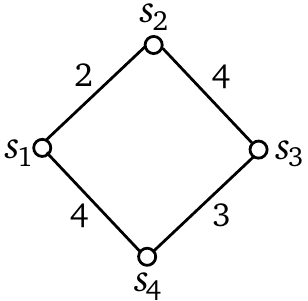}
\caption{\label{f: sample Coxeter diagram} A sample Coxeter diagram.}                      
\end{figure}

Note that if a Coxeter diagram $\Gamma(W,S)$ is disconnected,
then the Coxeter group can be expressed as a free product of
the groups given by the components.  For the remainder
of the paper, we work only with Coxeter groups whose diagrams
are connected. This assumption will be used 
in Section \ref{s: application}.  

By abuse of notation, we may use $s_i$ to denote the vertex of
the Coxeter diagram that is labelled by the generator $s_i$
as well as to denote the generator.

Note that the Coxeter diagram convention differs from the {\em Coxeter graph}, 
where edges are drawn if and only if $2<m_{ij}\leq \infty.$  The Coxeter
group associated to a 
disconnected Coxeter graph 
can be decomposed into a direct product.

Coxeter diagrams are more common in geometric group theory,
while Coxeter graphs are more common in Lie theory and combinatorics.

\subsection{Special splittings}
\label{s: special}

\begin{defn}
\label{d: special}
Given a Coxeter system $(W,S)$, a subgroup $W'$ of $W$ is {\em special}
if it is generated by a subset of $S$.   The notation
$(W',S')\subset (W,S)$ indicates that the subgroup $W'$ is generated by 
$S'\subset S$.  

As a set of vertices, a subset $S'$ spans a unique maximal subdiagram 
$\Gamma'=\Gamma(W')$ 
of the Coxeter
diagram $\Gamma(W)$ of $(W,S)$.  
We say that the subdiagram $\Gamma'$ is {\em special}
and call the associated subgroup $W(S')=\genby{S'}$.   

A Coxeter system $(W'',S'')$ is isomorphic to a special subgroup of $(W,S)$
if there is an injection $j:W''\hookrightarrow W$ carrying $S''$ to a subset of $S$.
We call $j$ a {\em special} injection.
\end{defn}

Recall that when a group can be expressed as the amalgamated product $A*_C B$
or HNN-extension $A*_C$,
we say that the group {\em splits} over $C$, and we refer to the amalgamated product or HNN-extension as a {\em splitting}.  In this paper, we will not need
to consider HNN-extensions.  We let the injections defining an amalgamated product be denoted $i_A:C\hookrightarrow A$ and $i_B: C\hookrightarrow B$.
A presentation of an amalgamated product is given by
   $$A*_C B=\left\langle S(A)\cup S(B) \;\; \bigg{\vert} \;\;
        R(A) \cup R(B) \cup \bigcup_{s\in S(C)} \{ i_A(s)=i_B(s) \}  
        \right\rangle,$$
where the group $A$ is given by the presentation $\gp{S(A)}{R(A)}$, the group $B$ is
given by the presentation $\gp{S(B)}{R(B)}$, and amalgamation subgroup 
$C$ is generated by $S(C)$.
 
\begin{defn}
\label{d: special splitting}
A splitting $W=A*_C B$ is a {\em special splitting} of a Coxeter system $(W,S)$
if the following conditions hold:
\begin{itemize}
\item $A$, $B$, and $C$ are special subgroups of $W$.
\item The Coxeter diagram $\Gamma(C)$ is a subdiagram of 
$\Gamma(A)$ and $\Gamma(B)$.
\item 
Let  $j_A: A\hookrightarrow W$,
$j_B: B\hookrightarrow W$, $j_C: C\hookrightarrow W$
be special injections for $A$, $B$, $C$ into the Coxeter system $(W,S)$. 
Let $\Gamma_A$, $\Gamma_B$, and $\Gamma_C$ 
be the subgraphs of $\Gamma(W)$
induced by the images of the special injection maps.  
Then the amalgamation maps $i_A: C\hookrightarrow A$ and $i_B:C\hookrightarrow B$
are induced by the inclusions of the subgraph $\Gamma_C$ into $\Gamma_A$
and $\Gamma_B$.
\end{itemize}
\end{defn}

The conditions in Definition \ref{d: special splitting} 
are Mihalik and Tschantz's {\em visual axioms} for the case
of splittings, or graphs of groups with one edge. 
The visual axioms for graphs of groups decompositions of Coxeter groups
were introduced by Mihalik and Tschantz in \cite{MT}, where the authors
used special decompositions to show accessibility with respect to 
2-ended splittings and to classify maximal FA-subgroups of finitely
generated Coxeter groups.
        
\begin{defn}
\label{d: trivial splitting}
A splitting is {\em trivial} if one of the amalgamation maps $i_A$ or $i_B$
is an isomorphism.
\end{defn}

When the amalgamation groups $A$ and $B$ are Coxeter groups,
and $C$ is a special subgroup of $A$ and $B$, then the group $A *_C B$ is a Coxeter
group as well.  Its diagram can be obtained
by ``visually amalgamating'' the Coxeter diagrams for $A$ and $B$ (as graphs), in the following manner:
\begin{defn}
\label{d: visual amalgamation}
 Suppose that 
$\alpha_A:\Gamma_C\hookrightarrow \Gamma_A, \; \alpha_B:\Gamma_C\hookrightarrow \Gamma_B$
are injections of the labelled simplicial graph $\Gamma_C$ 
carrying edges to edges, vertices to vertices, forgetting vertex labels, 
and such that labels on edges are preserved.
Then the labelled graph
    $$\Gamma=\Gamma_A \cup \Gamma_B / \sim,$$
where $x\sim y$ when $x=\alpha_A\circ \alpha_B^{-1}(y)$
is called the {\em visual amalgamation}
of the diagrams $\Gamma_A$ and $\Gamma_B$ over $\Gamma_C$.  
The edges of $\Gamma$ inherit labels from $\Gamma_A$ and $\Gamma_B$,
and the vertices of $\Gamma$ are unlabelled. We write $\Gamma=\Gamma_A\cup_{\Gamma_C} \Gamma_B.$ 
\end{defn}

Given Coxeter systems $(W_A,S_A)$, $(W_B,S_B)$, $(W_C, S_C)$, 
let $\Gamma_A$, $\Gamma_B$, $\Gamma_C$ be their associated Coxeter diagrams. 
Suppose that there are special injections from $W_C$ to $W_A$ and $W_B$ given by $j_A:(W_C,S_C)\hookrightarrow (W_A,S_A)$ and
$j_B:(W_C,S_C)\hookrightarrow (W_B,S_B)$.
Let $L_A:S_A\to \vx(\Gamma_A)$ be the bijection sending $s\in S_A$ to the vertex in $\Gamma_A$ labelled $s$, 
and similarly for $B$.  Let $L_C$ be the bijection between $S_C$
and vertices of $\Gamma_C$. Let $\alpha_A$ and $\alpha_B$ be defined
as in Definition \ref{d: visual amalgamation}.
Then the following diagram commutes:

\begin{displaymath}
\xymatrix{
\vx(\Gamma_A)\ar@{{<}->}[ddd]_{L_A}& \amalg & 
            \vx(\Gamma_B)\ar@{{<}->}[ddd]^{L_B} \\
            & \vx(\Gamma_C)\ar@{{<}->}[d]^{L_C}\ar@{^{(}->}[lu]^{\alpha_A}\ar@{_{(}->}[ru]_{\alpha_B}&  \\
           &    S_C\ar@{{<}->}[u]\ar@{_{(}->}[ld]^{j_A}\ar@{^{(}->}[rd]_{j_B}   &     \\
         S_A            &  \cup              &  S_B.       \\                                    
}
\end{displaymath}

\begin{prop}
\label{p: visual amalgamation of two groups}
Let $W=W_A*_{W_C} W_B$, where the amalgamation is given by the maps $j_A$ and $j_B$.   Let $(W',S')$ be the Coxeter system defined by the diagram $\Gamma$.
Then $W\cong W'$.
\end{prop}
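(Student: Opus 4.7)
The plan is to prove the isomorphism by comparing the two presentations directly and exhibiting an explicit bijection of generators that sends relators to relators in both directions. Both groups are given by presentations, so this reduces to a combinatorial check that the visual amalgamation rule on diagrams mirrors the amalgamation rule on presentations.

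First I would write down the standard presentation of $W'$ read off from $\Gamma = \Gamma_A \cup_{\Gamma_C} \Gamma_B$: the generating set $S'$ is in bijection with $\vx(\Gamma)$, there is a relator $s^2$ for every vertex, and a relator $(st)^{m}$ for every edge labelled $m$. Alongside this I would write down the standard presentation of the amalgamated product
\[
W = \bigl\langle S_A \cup S_B \;\big|\; R_A \cup R_B \cup \{ j_A(s) = j_B(s) : s \in S_C\} \bigr\rangle.
\]
Using the commutative diagram involving $L_A, L_B, L_C, \alpha_A, \alpha_B, j_A, j_B$, the quotient $\vx(\Gamma_A) \amalg \vx(\Gamma_B)/{\sim}$ that defines $\vx(\Gamma)$ corresponds under $L_A$ and $L_B$ precisely to the quotient $S_A \amalg S_B/{\sim}$ defined by $j_A(s) \sim j_B(s)$ for $s \in S_C$. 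This gives a canonical bijection of generating sets.

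Next I would match relators. Each vertex relator $s^2$ in $W'$ corresponds to the order-two relation on the generator in $W_A$ or $W_B$, and these are consistent on $S_C$ since the special injections $j_A, j_B$ are injections of Coxeter systems. Each edge of $\Gamma$ comes from an edge either of $\Gamma_A$ or of $\Gamma_B$ by definition of visual amalgamation; an edge lying in $\alpha_A(\Gamma_C) = \alpha_B(\Gamma_C)$ carries the same label from either side because $\alpha_A$ and $\alpha_B$ preserve edge labels (equivalently, because $j_A$ and $j_B$ are special injections and $W_C$ is a specified special subgroup of each of $W_A, W_B$). Thus every edge relator of $W'$ appears in $R_A \cup R_B$, and conversely every relator in $R_A \cup R_B$ comes from an edge of $\Gamma_A$ or $\Gamma_B$, hence from an edge of $\Gamma$. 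Using these identifications I would define mutually inverse homomorphisms between $W$ and $W'$ by sending generators to generators and verifying the finitely many relators map to consequences of the target relators.

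The main thing to guard against is the possibility of ``phantom'' relations: pairs $(s,t)$ with $s \in S_A \setminus j_A(S_C)$ and $t \in S_B \setminus j_B(S_C)$ satisfy no nontrivial relation in $W$ beyond those forced by $R_A \cup R_B$, and correspondingly there is no edge between the associated vertices in $\Gamma$, so $W'$ imposes no relation either. This compatibility is built into the definition of visual amalgamation as a pushout along $\Gamma_C$, and is the only subtle point; once it is verified the two presentations coincide letter-for-letter, so $W \cong W'$.
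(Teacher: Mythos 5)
Your argument is correct and is essentially the paper's proof: the paper disposes of this proposition with the single line ``by inspection of the commutative diagram following Definition \ref{d: visual amalgamation},'' and your write-up simply carries out that inspection explicitly, matching generators via $L_A, L_B, L_C, \alpha_A, \alpha_B$ and relators via the presentation of the amalgamated product. No gap; your attention to label consistency on $\Gamma_C$ and to the absence of relations between non-amalgamated generators is exactly the content the paper leaves implicit.
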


\begin{proof}  
By inspection of the commutative diagram following Definition \ref{d: visual amalgamation}.
\end{proof}

\begin{defn}
\label{d: determines a special splitting}
Let $S'$ be a subset of generators for the system $(W,S)$.
Suppose that there are subgroups $W_A\subset W$ and $W_B\subset W$,
and maps  
$i_A: \genby{S'}\hookrightarrow W_A$ and $i_B: \genby{S'}\hookrightarrow W_B$
so that the amalgamated product $W_A*_{\genby{S'}} W_B\cong (W,S)$ 
determined by $i_A$ and $i_B$ is a special splitting. Then 
we say that the subset $S'$ {\em determines
a special splitting} of $W$.
\end{defn}
\begin{exa}
Suppose $(W,S)=\gp{s_1,s_2, s_3, s_4}{\{s_i^2\}, \{(s_is_{i+1})^{m_{i,i+1}}\}}$
as in Figure \ref{fig: sample splitting}.  
The subset $S'=\{s_1,s_3\}$ determines
the splitting 
$$\genby{s_1,s_2,s_3}*_{\genby{s_1,s_3}} \genby{s_1,s_3,s_4}.$$
\end{exa}

\begin{figure}
\includegraphics{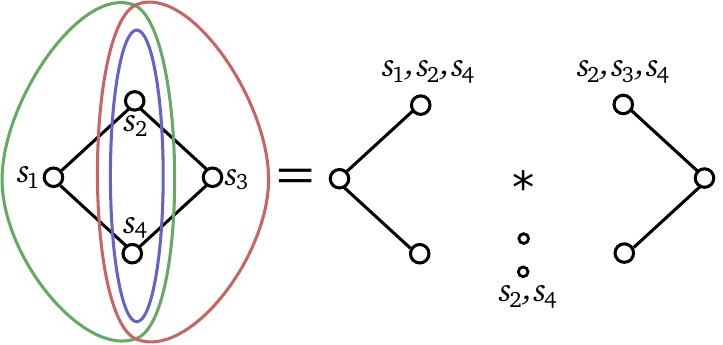}
\caption{\label{fig: sample splitting} A sample splitting.}
\end{figure}

\begin{rmk}
\label{r: splitting not unique}
A subset does not always determine a unique special splitting, even when $\Gamma$ is connected.
\end{rmk}

\begin{prop}
\label{prop: special splittings separate Gamma}
Suppose that a Coxeter system $(W,S)$ contains a special
subgroup $W'$ with diagram $\Gamma'\subset \Gamma$.  Then the
subgroup $W'$ determines a special splitting of the system $(W,S)$
if and only if the subdiagram $\Gamma'$ separates the diagram $\Gamma$. 
\end{prop}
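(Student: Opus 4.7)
The plan is to use Proposition \ref{p: visual amalgamation of two groups} in both directions, translating the group-theoretic notion of special splitting into the combinatorial structure of the diagram $\Gamma$. Throughout, I will interpret ``determines a special splitting'' as determining a nontrivial one (otherwise the proposition would fail, since $W'=W$ always gives a trivial special splitting without separating $\Gamma$). The central combinatorial fact to exploit is that in a visual amalgamation $\Gamma_A\cup_{\Gamma'}\Gamma_B$, every edge belongs to either $\Gamma_A$ or $\Gamma_B$, so no edge can join a vertex of $\Gamma_A\setminus\Gamma'$ to a vertex of $\Gamma_B\setminus\Gamma'$.

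For the ($\Leftarrow$) direction, I would suppose that removing $V(\Gamma')$ from $\Gamma$ disconnects it. Partition the components of $\Gamma\setminus V(\Gamma')$ into two nonempty collections, and let $\Gamma_A$, $\Gamma_B$ be the full subdiagrams of $\Gamma$ spanned by $V(\Gamma')$ together with the vertices of each collection. Since $\Gamma'$ separates, every edge of $\Gamma$ has both endpoints in $\Gamma_A$ or both endpoints in $\Gamma_B$, so $\Gamma=\Gamma_A\cup_{\Gamma'}\Gamma_B$ in the sense of Definition \ref{d: visual amalgamation}. Letting $W_A$, $W_B$ be the special subgroups associated to $\Gamma_A$, $\Gamma_B$, Proposition \ref{p: visual amalgamation of two groups} gives $W\cong W_A*_{W'} W_B$, and this is a special splitting by construction; the inclusions $W'\hookrightarrow W_A$ and $W'\hookrightarrow W_B$ are proper because $\Gamma_A\setminus\Gamma'$ and $\Gamma_B\setminus\Gamma'$ are each nonempty, so the splitting is nontrivial.

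For the ($\Rightarrow$) direction, I would start from a nontrivial special splitting $W\cong W_A*_{W'}W_B$ guaranteed by Definition \ref{d: determines a special splitting}, and let $\Gamma_A,\Gamma_B$ be the Coxeter diagrams of $W_A,W_B$. By Proposition \ref{p: visual amalgamation of two groups} the diagram of $W$ is the visual amalgamation $\Gamma_A\cup_{\Gamma'}\Gamma_B$, and up to the identification maps $\alpha_A,\alpha_B$ this coincides with $\Gamma$ itself. Nontriviality forces $\Gamma_A\setminus\Gamma'$ and $\Gamma_B\setminus\Gamma'$ to each contain a vertex. The combinatorial observation from the first paragraph now applies: no edge of $\Gamma$ joins a vertex of $\Gamma_A\setminus\Gamma'$ to one of $\Gamma_B\setminus\Gamma'$, so any path in $\Gamma$ between such vertices must pass through $V(\Gamma')$. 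Hence $\Gamma\setminus V(\Gamma')$ is disconnected, i.e., $\Gamma'$ separates $\Gamma$.

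The main obstacle, if there is one, is not really technical but conceptual: one must verify that the abstract amalgamated product structure of a special splitting really does match the visual amalgamation of the diagrams, which is exactly the content of Proposition \ref{p: visual amalgamation of two groups} applied in reverse. Once this translation is made, the proposition reduces to the elementary graph-theoretic observation about which edges survive a visual amalgamation, and the rest is essentially bookkeeping.
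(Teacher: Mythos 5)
Your proof is correct and follows essentially the same route as the paper's: both directions reduce to Proposition \ref{p: visual amalgamation of two groups}, constructing $\Gamma_A$ and $\Gamma_B$ as the subdiagrams spanned by $\Gamma'$ together with the two sides of the separation, and conversely reading separation off the visual amalgamation $\Gamma=\Gamma_A\cup_{\Gamma'}\Gamma_B$. Your explicit observations that no edge of $\Gamma$ joins $\Gamma_A\setminus\Gamma'$ to $\Gamma_B\setminus\Gamma'$, and that nontriviality of the splitting is what makes the converse literally true, merely spell out details the paper leaves implicit.
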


\begin{proof}
Suppose that $\Gamma'$ separates $\Gamma$.  Then there exist
open nonempty disjoint subsets $\Gamma_A'$ and $\Gamma_B'$ of 
$\Gamma\setminus \Gamma'$ that cover $\Gamma\setminus \Gamma'$.
Let $\Gamma_A$ be the maximal subgraph of $\Gamma$ spanned by 
$\vx(\Gamma'\cup\Gamma_A')$, and set $W_A=\genby{\vx(\Gamma_A)}$, and
similarly for $B$.
By Proposition \ref{p: visual amalgamation of two groups}, the Coxeter system
corresponding to the diagram
$\Gamma=\Gamma_A\cup_{\Gamma'}\Gamma_B$ is $W_A*_{W'} W_B\cong W$.

Conversely, suppose that $W'$ determines a special splitting. 
Let $W_A *_{W'} W_B$ be a special splitting of $W$.
By Proposition \ref{p: visual amalgamation of two groups},
the Coxeter diagram for $W$ is given by $\Gamma_A\cup_{\Gamma'} \Gamma_B$
with the amalgamation maps induced by the identity inclusion.  Hence
$\Gamma_A\cap \Gamma_B=\Gamma'$, so $\Gamma'$ separates $\Gamma$.
\end{proof}

We are ultimately
interested in nontrivial special splittings.  Recall from Definition \ref{d: trivial splitting} that 
 trivial splitting occurs when at least one of the groups $W_A$ or $W_B$
equals $W$.  This is the case if and only if $i_A:\genby{S_C}\hookrightarrow W_A$ or $i_B:\genby{S_C}\hookrightarrow W_B$
is an isomorphism, so one of $S_A$ or $S_B$ is the entire
set $S$.
Thus a trivial splitting occurs when one
of the subdiagrams $\Gamma_A$ or $\Gamma_B$ is the entire diagram $\Gamma$.

\section{Bounding the movement of midpoints}
\label{s: bounding midpoints}

The main result of this section is Proposition \ref{p: epsilon-R estimate},
which finds an estimate $\Lambda(\epsilon, R)$ for the length of
a geodesic segment $e$ in $\H^n$ that guarantees the following:
if an isometric involution of $\H^n$ moves the endpoints of $e$ at most
distance $R$,
then the midpoint of $e$ is moved at most $\epsilon$.

Recall that a group is {\it virtually} $P$ if it contains 
a finite index subgroup with property $P$.

\begin{defn}
\label{d: small group}
A group is {\em small} if it is virtually solvable.
\end{defn}

\begin{thm}[Kazhdan-Margulis Theorem, \cite{KM68}]
\label{t: K-M}
There exists a constant $\mu_n>0$ (called the {\em Margulis constant}
for $\H^n$) 
with the following property. 
Let $x\in \H^n$ and $G$ be a discrete subgroup of $\Isom(\H^n)$
generated by $\{g_j\}$ such that
$$d(x,g_j(x))\leq \mu_n$$ for all $j$.  Then the group $G$
is small.
\end{thm}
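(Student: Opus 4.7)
The plan is to follow the classical Zassenhaus-type argument, using that commutators of elements close to the identity in $\Isom(\H^n)$ contract quadratically in displacement.

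First I would establish a commutator contraction estimate at $x$. Identifying $\Isom(\H^n)$ near the identity with $\mathfrak{so}(n,1)$ via the exponential map, one has the Cartan decomposition $\mathfrak{g}=\mathfrak{k}\oplus\mathfrak{p}$, where $\mathfrak{k}$ is the Lie algebra of $K=\stab(x)$. For $g$ near the identity, the displacement $d(x,gx)$ is comparable to the norm of the $\mathfrak{p}$-component of $\log g$, while the $\mathfrak{k}$-component contributes nothing to order one. Using Baker--Campbell--Hausdorff, I would show there exist constants $\delta_0 > 0$ and $C = C(n)$ such that whenever $d(x,gx),\,d(x,hx) < \delta_0$,
$$ d\bigl(x,[g,h]x\bigr) \;\leq\; C\,d(x,gx)\,d(x,hx), $$
reflecting the bilinearity of the Lie bracket to leading order.

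Next I would set $\mu_n := \min\bigl(\delta_0,\,1/(2C)\bigr)$ and apply the estimate recursively to iterated commutators of the generators $g_j$. This produces a descending sequence of elements of $G$ whose displacements at $x$ decay at least geometrically (in fact doubly exponentially) and so tend to $0$. Since $G$ is discrete and $K\cong O(n)$ is compact, $\stab_G(x) = G\cap K$ is finite and $\{g\in G : d(x,gx)\leq \mu_n\}$ is finite modulo $\stab_G(x)$. Consequently the shrinking iterated commutators must eventually be trivial modulo $\stab_G(x)$, so the lower central series of a finite-index subgroup of $G$ terminates. Hence $G$ is virtually nilpotent, in particular virtually solvable, i.e.\ small in the sense of Definition~\ref{d: small group}.

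The main obstacle I anticipate is controlling the rotational ($K$-valued) part of the iterated commutators, since the displacement function $d(x,\cdot\,x)$ is blind to elements fixing $x$. The fix is to choose $\mu_n$ small enough that the projection $\Isom(\H^n)\to\Isom(\H^n)/K$ restricted to small-displacement elements behaves injectively on the relevant commutator outputs, and then to exploit compactness of $K$ together with discreteness of $G$ to ensure that the rotational drift accumulated over the nested commutators is uniformly bounded, forcing termination of the lower central series after finitely many steps.
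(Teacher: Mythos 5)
The paper does not prove this statement at all—it is quoted from Kazhdan--Margulis \cite{KM68}—so the only comparison available is with the standard Zassenhaus/Margulis argument, which is indeed the route you sketch. Unfortunately, as written your central estimate is false, and the failure is exactly at the point you flag as the ``main obstacle.'' The bilinear bound $d(x,[g,h]x)\leq C\,d(x,gx)\,d(x,hx)$ cannot hold, because displacement at $x$ is blind to the $K$-part of an isometry while the commutator is not: take $g$ a hyperbolic translation through $x$ with $d(x,gx)=\epsilon$ and $h\in \stab(x)$ the rotation by $\pi$ in a plane containing the axis of $g$. Then $d(x,hx)=0$, so your right-hand side is $0$, but $d(x,[g,h]x)=d(x,ghg^{-1}x)=d(g^{-1}x,hg^{-1}x)=2\epsilon>0$. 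The BCH/quadratic-contraction mechanism needs $g$ and $h$ close to the identity in $\Isom(\H^n)$ (rotational part included), and the whole content of the theorem is that small displacement at $x$ does \emph{not} give this.

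Your proposed repair does not supply an argument: the projection to $\Isom(\H^n)/K$ cannot be made ``injective on the relevant commutator outputs,'' and discreteness alone does not bound the rotational drift of nested commutators. The classical proof handles this differently: the set of isometries moving $x$ at most $\mu$ is compact, hence covered by finitely many translates of a Zassenhaus neighbourhood $U$ of the identity; a pigeonhole argument then produces a finite-index subgroup of $G$ generated by elements genuinely lying in $U$, and for \emph{those} elements the commutator contraction holds in the group topology, so discreteness forces iterated commutators to equal $1$ (not merely to land in $\stab_G(x)$), giving a nilpotent finite-index subgroup. Note also that even granting your estimate, concluding that iterated commutators become ``trivial modulo $\stab_G(x)$'' would not terminate the lower central series of any subgroup; you need them to become trivial outright. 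So the proposal, while in the spirit of the standard proof, has a genuine gap at its key step; since the paper simply cites \cite{KM68}, the cleanest course is to do the same or to reproduce the compactness/pigeonhole version of the argument.
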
  

\begin{prop}
\label{p: epsilon-R estimate}
(See Figure \ref{f: kap-quad-lemma}.)
Set $R\geq0$.  Let $e=[v_1v_2]$ be a geodesic segment
in $\H^n$ and let $s$ be an isometric involution of $\H^n$.
Suppose that they satisfy
$$d(v_1,s(v_1)),d(v_2,s(v_2))\leq R.$$

Let $m$ denote the midpoint of $e$.  
Define $\Lambda(\epsilon,R)$ as
    \begin{equation}
    \label{e: Lambda}
    \Lambda(\epsilon, R)
      =\frac{4}{\epsilon}
       +2R.
    \end{equation}
    Then for every $\epsilon>0$,
if $d(v_1,v_2)\geq \Lambda(\epsilon,R)$, then $d(m,s(m))\leq \epsilon$.

\end{prop}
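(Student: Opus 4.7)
The plan is to translate the desired displacement bound into a bound on the distance from $m$ to the fixed set of $s$, and then to extract the estimate from standard hyperbolic trigonometry applied to a Saccheri-type quadrilateral.

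Every isometric involution $s$ of $\H^n$ fixes a nonempty totally geodesic subspace $F$. For any point $x \in \H^n$, the midpoint of $[x, s(x)]$ is fixed by $s$ (since $s$ swaps its endpoints isometrically) and therefore lies in $F$; combined with the fact that $d(x, p) = d(s(x), p)$ for every $p \in F$, this yields the identity $d(x, s(x)) = 2\, d(x, F)$. Hence the hypothesis reads $d(v_i, F) \leq R/2$ for $i = 1, 2$, and it suffices to prove that $d(m, F) \leq \epsilon/2$.

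Let $p_i$ be the orthogonal projection of $v_i$ onto $F$, set $h_i = d(v_i, p_i) \leq R/2$, and write $L = d(p_1, p_2)$. The triangle inequality gives $L \geq d(v_1, v_2) - h_1 - h_2 \geq \Lambda(\epsilon, R) - R = 4/\epsilon + R$. Restrict attention to the (at most two-dimensional) totally geodesic subspace containing $v_1, v_2, p_1, p_2$. A direct calculation in the upper-half-plane model with $F$ placed along a vertical geodesic --- equivalently, the classical Saccheri identity $\tanh \phi = \tanh(h) / \cosh(L/2)$ in the symmetric case $h_1 = h_2 = h$ --- controls $\phi := d(m, F)$ in terms of $h$ and $L$. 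Crude estimates ($\tanh h \leq 1$, $\cosh(L/2) \geq L/2$ for $L$ not too small, and $\phi \leq 2 \tanh \phi$ for $\phi$ small) reduce this to a bound of the form $\phi \leq C/L$; substituting $L \geq 4/\epsilon + R$ yields $\phi \leq \epsilon/2$ after bookkeeping of constants, and hence $d(m, s(m)) = 2\phi \leq \epsilon$.

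The main obstacle is the asymmetric case, where $h_1 \neq h_2$ or where $v_1$ and $v_2$ lie on opposite sides of $F$ (in which case the geodesic $[v_1, v_2]$ meets $F$). A clean way to reduce to the symmetric Saccheri estimate is to observe that enlarging the heights to their common maximum $h = \max(h_1, h_2) \leq R/2$ can only increase $\phi$, by monotonicity of the relevant trigonometric formula. Alternatively, one can split $[v_1, v_2]$ at its closest point to $F$ and apply the symmetric estimate to each of the two sub-segments, combining via the convexity of $t \mapsto d(\gamma(t), F)$ along the geodesic. Once the symmetric case is settled, matching the precise constant $4/\epsilon + 2R$ in $\Lambda(\epsilon, R)$ is a routine calculation.
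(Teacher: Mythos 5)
Your opening reduction is sound and matches the paper's strategy: $d(x,s(x))=2\,d(x,F)$ for the fixed set $F$ of the involution, so the hypotheses give $d(v_i,F)\le R/2$ and it suffices to show $d(m,F)\le\epsilon/2$. The genuine gap comes next, in the sentence asking to ``restrict attention to the (at most two-dimensional) totally geodesic subspace containing $v_1,v_2,p_1,p_2$.'' Four points of $\H^n$ span a totally geodesic subspace of dimension up to three, and the quadrilateral $v_1p_1p_2v_2$ is in general skew: already in $\H^3$, with $F$ a geodesic line or a plane, the two feet $p_1,p_2$ and the endpoints $v_1,v_2$ need not be coplanar (the paper explicitly warns that such quadrilaterals ``may not necessarily be planar''). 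The Saccheri/Lambert identity $\tanh\phi=\tanh h/\cosh(L/2)$ is a statement of plane hyperbolic trigonometry and cannot be applied to a skew quadrilateral, and neither of your patches repairs this: symmetrizing the heights to $\max(h_1,h_2)$ and splitting $[v_1v_2]$ at its closest point to $F$ only address unequal heights and the case where the segment meets $F$, while convexity of $t\mapsto d(\gamma(t),F)$ by itself yields only $d(m,F)\le R/2$, with no decay in $L$.

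The gap is repairable in two ways. One is the paper's: apply a quadrilateral estimate valid in $\H^n$ with no planarity hypothesis (Lemma \ref{l: kap quad lemma}, from \cite{Kbook}, which needs only two right angles and a third angle at least $\pi/2$ and gives $d(x,w)\le \sinh^{-1}(1/\sinh d(x,y))$). The other stays closer to your computation: in the hyperboloid model write $F=\{x:\langle x,u_i\rangle=0\}$ for Lorentz-orthonormal spacelike vectors $u_i$; then $\sinh d(x,F)=\bigl(\sum_i\langle x,u_i\rangle^2\bigr)^{1/2}$, each function $t\mapsto\langle\gamma(t),u_i\rangle$ satisfies $f''=f$ along a geodesic, and solving this linear ODE with the boundary bounds gives $\sinh d(m,F)\le \sinh(R/2)/\cosh\bigl(d(v_1,v_2)/2\bigr)$ for every dimension and codimension of $F$, from which $\Lambda(\epsilon,R)=4/\epsilon+2R$ follows with room to spare. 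Two smaller points: your listed crude bounds ($\cosh(L/2)\ge L/2$ and $\phi\le 2\tanh\phi$) only give $\phi\le\epsilon$, i.e.\ $d(m,s(m))\le 2\epsilon$, so you need the exponential bound $\cosh(L/2)\ge e^{L/2}/2$ (or the paper's $\sinh^{-1}(1/\sinh x)\le 1/x$) to land on the stated constant; and the monotonicity-in-$h$ claim used for symmetrization is asserted, not proved.
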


\begin{figure}
\includegraphics{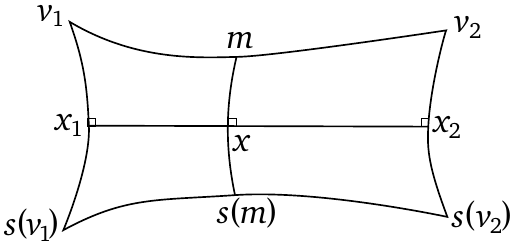}
\caption{\label{f: epsilon-R estimate} Diagram for Proposition \ref{p: epsilon-R estimate}.}
\end{figure}
\begin{figure}
\includegraphics{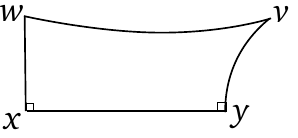}
\caption{\label{f: kap-quad-lemma} Diagram for Lemma \ref{l: kap quad lemma}.}
\end{figure}

The proof of Proposition \ref{p: epsilon-R estimate} relies on the 
convexity of the distance function in hyperbolic space
via the propositions that follow.

Here and in future sections, we use $[xy]_\H\subset \H^n$ 
to denote the geodesic path between two points $x$ and $y$ in
$\H^n$.   We denote the triangle $[xy]_\H\cup[yz]_\H\cup[zx]_\H$ by 
$[xyz]_\H$ and define a quadrilateral $[xyzw]_\H$ similarly.  A quadrilateral
$[xyzw]_\H$ may not necessarily be planar.  Where there is no ambiguity,
we may suppress subscripts.

\begin{lemma}[{Convexity of the hyperbolic distance function
\cite[II.2, Proposition 2.2]{BH}}].
\label{c: convexity of distance}
Let $X$ be a geodesic metric space, and $c:[0,1]\to X$ and $c':[0,1]\to X$ be
two geodesics parametrized proportionally to arc length.    Then for any $t,t'\in [0,1]$,
the maps $c$ and $c'$ satisfy the inequality
    $$d(c(t),c'(t))\leq (1-t)d(c(0),c'(0))+td(c(1),c'(1)).$$
\end{lemma}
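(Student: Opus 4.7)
The plan is to reduce the inequality to the classical CAT(0) comparison for a single triangle, applied twice via an intermediate diagonal geodesic. I note in passing that as stated the conclusion requires more than a bare geodesic metric space (one needs nonpositive curvature in some sense, e.g.\ CAT(0)); in the ambient $\H^n$ to which we will apply the lemma, this is automatic since $\H^n$ is CAT($-1$), so CAT(0). I will therefore proceed under this understanding, which is also the setting of the cited Bridson--Haefliger reference.

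The core step is the following single-triangle statement, which I would either quote from the CAT(0) comparison axiom or derive from the hyperbolic law of cosines: if $\Delta = [xyz]$ is a geodesic triangle in $\H^n$, $c:[0,1]\to \H^n$ is the constant-speed geodesic from $x$ to $y$, and $c':[0,1]\to \H^n$ is the constant-speed geodesic from $x$ to $z$, then
\[
d(c(t),c'(t)) \leq t\, d(y,z) \qquad \text{for all } t\in[0,1].
\]
This is the standard ``segment from a common vertex'' convexity, following directly from comparing with the Euclidean (or hyperbolic) comparison triangle of the same side lengths.

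Once that single-vertex inequality is in hand, I would insert an auxiliary geodesic. Write $p = c(0)$, $q = c(1)$, $p' = c'(0)$, $q' = c'(1)$, and let $\gamma:[0,1]\to \H^n$ be the constant-speed geodesic from $p$ to $q'$. Applying the single-vertex inequality to the triangle $[p\,q\,q']$ with apex $p$ (so $c$ and $\gamma$ share their initial point) gives
\[
d(c(t),\gamma(t)) \leq t\, d(q,q') = t\, d(c(1),c'(1)).
\]
Applying it again to the triangle $[p\,p'\,q']$ with apex $q'$ (reparametrizing so that $\gamma$ and $c'$ share their terminal point) gives
\[
d(\gamma(t),c'(t)) \leq (1-t)\, d(p,p') = (1-t)\, d(c(0),c'(0)).
\]
Adding these two bounds via the triangle inequality
\[
d(c(t),c'(t)) \leq d(c(t),\gamma(t)) + d(\gamma(t),c'(t))
\]
yields exactly the claimed estimate.

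The main subtlety, and the reason this argument does not extend to arbitrary geodesic metric spaces, is the single-vertex convexity step: it is precisely the infinitesimal manifestation of nonpositive curvature. In $\H^n$ one can verify it directly from the hyperbolic law of cosines (the function $\cosh d(c(t),c'(t))$ is convex in $t$ when $c,c'$ share a vertex), which is the only non-formal input in the argument; everything else is the triangle inequality and the auxiliary-diagonal trick.
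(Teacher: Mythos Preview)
Your proof is correct, and in fact it is essentially the argument that appears in the cited Bridson--Haefliger reference; the paper itself does not supply a proof but simply quotes the result. Your observation that the hypothesis ``geodesic metric space'' is too weak and that CAT(0) (hence $\H^n$) is the intended setting is well taken --- the cited Proposition~II.2.2 in \cite{BH} is stated for CAT(0) spaces, and the single-vertex convexity step you isolate is exactly where nonpositive curvature enters.
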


The following is an immediate consequence of Lemma \ref{c: convexity of distance}.

\begin{cor}
\label{c: convexity corollary}
Define $c$ and $c'$ as in Lemma \ref{c: convexity of distance}, let $I$ denote the interval $[0,1]$, 
and let $C'=c'(I)$.
Suppose $d(c(0),c'(0)),d(c(1),c'(1))\leq r$.  Then $d(c(t),C')\leq r$ for all $t\in I$.
\end{cor}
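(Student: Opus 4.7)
The plan is to apply Lemma \ref{c: convexity of distance} directly, using the hypothesis that both endpoint distances are bounded by the same constant $r$, and then replace the pointwise distance to $c'(t)$ with the (smaller) distance to the full image $C'$.

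First, I would fix an arbitrary $t \in I$ and consider the specific point $c'(t) \in C'$. Applying Lemma \ref{c: convexity of distance} gives
\[
d(c(t), c'(t)) \leq (1-t)\, d(c(0), c'(0)) + t\, d(c(1), c'(1)).
\]
Next, substituting the hypotheses $d(c(0), c'(0)) \leq r$ and $d(c(1), c'(1)) \leq r$ into the right-hand side yields the bound $(1-t) r + t r = r$, since the coefficients $(1-t)$ and $t$ are nonnegative and sum to $1$. This is just a convex combination argument, so no calculation beyond the substitution is required.

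Finally, since $c'(t)$ lies in the image $C' = c'(I)$, the infimum defining $d(c(t), C')$ is bounded above by $d(c(t), c'(t))$, giving $d(c(t), C') \leq r$. As $t \in I$ was arbitrary, the conclusion follows. There is no real obstacle here; the entire content is that the convexity bound of Lemma \ref{c: convexity of distance} degenerates to a uniform bound when the two endpoint distances coincide (or are both dominated by a common value $r$).
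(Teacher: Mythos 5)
Your proof is correct and is exactly the argument the paper has in mind: the paper states this corollary as an immediate consequence of Lemma \ref{c: convexity of distance}, and your convex-combination bound followed by passing from the point $c'(t)$ to the set $C'$ is that immediate argument spelled out.
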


To obtain the estimate in Proposition \ref{p: epsilon-R estimate}, we use the function
$$h(x)=\sinh^{-1}\left( \frac{1}{\sinh(x)}\right),$$
where $x\in\R$ is strictly positive. 

\begin{lemma}[{\cite[Lemma 3.5, pp. 34-35]{Kbook}}]
\label{l: kap quad lemma}
(See Figure \ref{f: kap-quad-lemma}.) 
Let $[xyvw]$ be a quadrilateral in $\H^n$ with angles
$\angle wxy =\frac{\pi}{2}$, $\angle xyv=\frac{\pi}{2}$,
$\angle xwv \geq \frac{\pi}{2}$. Then 
 $$d(x,w)\leq h(d(x,y)).$$
\end{lemma}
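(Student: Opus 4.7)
The plan is to reduce the claim to the algebraic inequality $\sinh(d(x,w))\sinh(d(x,y))\leq 1$, which is precisely what $d(x,w)\leq h(d(x,y))$ unpacks to. Set $a=d(x,w)$, $b=d(x,y)$, $c=d(y,v)$.

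I would work in the hyperboloid model of $\H^n$ and use the two right-angle hypotheses to place the vertices in convenient coordinates. Taking $x$ at the basepoint and $y$ along a first coordinate axis at distance $b$, the condition $\angle wxy=\pi/2$ lets me run the ray $xw$ along an orthogonal axis at distance $a$ from $x$, and $\angle xyv=\pi/2$ lets me run the ray $yv$ perpendicular to $xy$ from $y$ at distance $c$. When $n\geq 3$ the two half-planes $\mathrm{span}(xy,xw)$ and $\mathrm{span}(xy,yv)$ meet along $xy$ at a dihedral angle $\theta\in[0,\pi]$, which appears as a parameter in the coordinates of $v$.

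A short Minkowski inner product computation then gives $\cosh(d(x,v))=\cosh(b)\cosh(c)$ and
$$\cosh(d(w,v))=\cosh(a)\cosh(b)\cosh(c)-\sinh(a)\sinh(c)\cos\theta.$$
The triangle $xwv$ lies in a totally geodesic plane, so applying the hyperbolic law of cosines at $w$ and simplifying yields
$$\sinh(d(w,v))\cos(\angle xwv)=\sinh(a)\cosh(b)\cosh(c)-\cosh(a)\sinh(c)\cos\theta.$$
Imposing $\cos(\angle xwv)\leq 0$ and dividing by $\cosh(a)\cosh(c)$ gives $\tanh(a)\cosh(b)\leq\tanh(c)\cos\theta$. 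Since $\tanh(c)<1$ and $\cos\theta\leq 1$, this forces $\sinh(a)\cosh(b)\leq\cosh(a)$, and squaring together with the identity $\cosh^2(a)-\sinh^2(a)=1$ yields $\sinh^2(a)\sinh^2(b)\leq 1$, as required.

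The step I expect to be most delicate is handling $n\geq 3$, where the quadrilateral $[xyvw]_\H$ need not lie in a single totally geodesic plane and the dihedral angle $\theta$ genuinely influences $d(w,v)$. What rescues the argument is that $\theta$ enters only through $\cos\theta\leq 1$, so non-planarity can only relax the constraint imposed by $\angle xwv\geq\pi/2$, and the planar case ($\cos\theta=1$) is already extremal. Once this is observed, the whole argument amounts to standard bookkeeping with the hyperbolic Pythagorean theorem and the law of cosines; alternatively one can quote the corresponding computation from \cite{Kbook}.
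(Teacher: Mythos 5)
Your computation is correct, and it checks out line by line: placing $x$ at the basepoint of the hyperboloid model, the Minkowski inner products indeed give $\cosh d(x,v)=\cosh b\cosh c$ and $\cosh d(w,v)=\cosh a\cosh b\cosh c-\sinh a\sinh c\cos\theta$; the law of cosines at $w$, after cancelling a factor of $\sinh a$, gives exactly $\sinh(d(w,v))\cos(\angle xwv)=\sinh a\cosh b\cosh c-\cosh a\sinh c\cos\theta$; and the hypothesis $\angle xwv\geq\tfrac{\pi}{2}$ then forces $\tanh a\cosh b\leq\tanh c\cos\theta<1$, which unwinds to $\sinh(d(x,w))\sinh(d(x,y))\leq 1$, i.e.\ $d(x,w)\leq h(d(x,y))$. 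Your handling of the non-planar case is the right observation: the dihedral angle enters only through $\cos\theta\leq 1$, so the planar configuration is extremal. The degenerate cases ($w=x$, $w=v$, $c=0$) are trivial and harmless. Note that the paper does not prove this lemma at all --- it is quoted from Kapovich's book \cite[Lemma 3.5]{Kbook} --- so there is no internal argument to compare with; what your write-up buys is a short, self-contained replacement for the citation, reducing the statement to the algebraic inequality $\sinh(d(x,w))\sinh(d(x,y))\leq 1$ and verifying it directly with the hyperbolic Pythagorean theorem and law of cosines.
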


\begin{cor}
\label{l: epsilon quad corollary}
Fix $\epsilon>0, R\geq0$.
Let $[xyvw]$ be defined as in Lemma \ref{l: kap quad lemma}.
Suppose that $$d(x,w),d(y,v)\leq \frac{R}{2} \textrm{ and } 
d(v,w)\geq h^{-1}\left(\frac{\epsilon}{2}\right)+R.
$$
Then 
$d(x,w)\leq \frac{\epsilon}{2}$.
\end{cor}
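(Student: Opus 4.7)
The plan is to combine the bound from Lemma \ref{l: kap quad lemma} with a triangle-inequality lower bound on $d(x,y)$, using the hypothesis that $d(v,w)$ is large. First I would observe that the function $h(t)=\sinh^{-1}(1/\sinh t)$ is strictly decreasing on $(0,\infty)$: indeed $\sinh$ is increasing and positive, so $1/\sinh$ is decreasing, and postcomposing with the increasing function $\sinh^{-1}$ preserves monotonicity. Hence $h^{-1}$ is well-defined on the appropriate range and is itself decreasing, and the inequality $d(x,w)\le h(d(x,y))$ supplied by Lemma \ref{l: kap quad lemma} translates bounds from below on $d(x,y)$ into bounds from above on $d(x,w)$.

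Next I would bound $d(x,y)$ from below using the perimeter inequality for the quadrilateral $[xyvw]$:
\begin{equation*}
d(v,w)\;\le\;d(v,y)+d(y,x)+d(x,w).
\end{equation*}
Rearranging and inserting the hypotheses $d(x,w),d(y,v)\le R/2$ and $d(v,w)\ge h^{-1}(\epsilon/2)+R$ gives
\begin{equation*}
d(x,y)\;\ge\;d(v,w)-d(v,y)-d(x,w)\;\ge\;\bigl(h^{-1}(\epsilon/2)+R\bigr)-\tfrac{R}{2}-\tfrac{R}{2}\;=\;h^{-1}(\epsilon/2).
\end{equation*}

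Finally, applying Lemma \ref{l: kap quad lemma} and the monotonicity of $h$ yields
\begin{equation*}
d(x,w)\;\le\;h(d(x,y))\;\le\;h\bigl(h^{-1}(\epsilon/2)\bigr)\;=\;\tfrac{\epsilon}{2},
\end{equation*}
which is the desired conclusion. There is no serious obstacle here; the entire argument is a two-line computation once Lemma \ref{l: kap quad lemma} is in hand. The only point warranting a moment of care is verifying that $h$ is decreasing, so that passing from the lower bound on $d(x,y)$ to the upper bound on $h(d(x,y))$ is legitimate, and noting that the perimeter inequality $d(v,w)\le d(v,y)+d(y,x)+d(x,w)$ holds automatically in any metric space by iterating the triangle inequality.
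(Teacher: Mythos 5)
Your proof is correct and follows essentially the same route as the paper: note that $h$ is decreasing, use the triangle (perimeter) inequality together with $d(x,w),d(y,v)\le R/2$ to get $d(x,y)\ge h^{-1}(\epsilon/2)$, and then apply Lemma \ref{l: kap quad lemma}. The paper phrases the same computation with $g=h^{-1}(\epsilon/2)+R$ and the bound $d(x,y)\ge g-R$, but the argument is identical.
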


\begin{proof}[Proof of Lemma \ref{l: epsilon quad corollary}]
We first note that $h$ is decreasing.  Let $g=h^{-1}(\frac{\epsilon}{2})+R$.  Since
$d(v,w)\geq g$, we have $$d(x,y)\geq g-(d(x,w)+d(y,v))\geq g-R$$
and $h(d(x,y))\leq h(g-R)$.  We conclude that 
$$d(x,w)\leq h(d(x,y))\leq h(g-R)
   =h\left(h^{-1}\left(\frac{\epsilon}{2}\right)+R-R\right)=\frac{\epsilon}{2}.$$\end{proof}

\begin{proof}[Proof of Proposition \ref{p: epsilon-R estimate}.]
We show that a lower bound of
$\Lambda(\epsilon,R)$ on the length of an edge $e$
guarantees an upper bound on the movement of the midpoint $m$ of $e$. 

Let $F_s$ denote the fixed-point set of the involution $s$.
Let $x_i$ denote the orthogonal projection of $v_i$ to the fixed-point 
set $F_s$. Let $P:e\to [x_1x_2]$ denote the orthogonal projection
from $e$ to the geodesic segment $[x_1x_2]$, and $m$ be
the midpoint of $e$.  Then $d(m,P(m))\leq d(v_i,x_i)\leq \frac{R}{2}$
by Corollary \ref{c: convexity corollary}.

Suppose that $x_1$ and $x_2$ are distinct points.
Set $x=P(m)$.  Either $\angle xmv_2\geq \frac{\pi}{2}$ 
or $\angle xmv_1\geq \frac{\pi}{2}$, since they are complementary angles.  
Without loss of generality,
assume that $\angle xmv_2 \geq \frac{\pi}{2}.$  Then 
$\angle xs(m)s(v_2)\geq \frac{\pi}{2}$, and the quadrilaterals
$[xx_2v_2m], [xx_2s(v_2)s(m)]$ satisfy the conditions of 
Lemma \ref{l: kap quad lemma}.

Let $g=h^{-1}(\frac{\epsilon}{2})+R$ as in 
Corollary \ref{l: epsilon quad corollary}.   We assume
that the length of $e=[v_1v_2]$ is at least $2g$, so
$d(m,v_2)=d(s(m),s(v_2))\geq g$. Lemma \ref{l: epsilon quad corollary}
shows that $d(x,m)=d(x,s(m))\leq \frac{\epsilon}{2}$.
We conclude that $$d(m,s(m))\leq d(x,m)+d(x,s(m))\leq \epsilon.$$
The case when $x_1$ and $x_2$ coincide follows by continuity.

We have shown that when the length of $e$ is at least
     $$2h^{-1}\left(\frac{\epsilon}{2}\right)+2R,$$
the midpoint of $e$ is moved no more than $\epsilon$.

To complete the proof of the proposition, note that $h^{-1}(x)\leq \frac{1}{x}$.
Hence it is sufficient to take the length of the edge $e$ to be at least 
$\frac{4}{\epsilon}+2R$ as desired.
\end{proof}

\section{The quasi-convex hull is approximately a tree}
\label{s: qc hull}

Here, we show that the ``quasi-convex hull'' of a finite subset of $\H^n$
is quasi-isometric to the Gromov approximating tree for that subset.

Suppose $X$ is a finite subset of $\H^n$.  We define its quasi-convex hull $Q(X)$
as the union of geodesic segments between pairs of points of $X$:
   $$Q(X)=\bigcup_{x,y\in X} [xy]\subset \H^n.$$
We refer to the segments comprising $Q(X)$ as edges of $Q(X)$.

\begin{defn}
\label{d: q-i}
Recall that two spaces $X$ and $Y$ are $(L,A)$-{\em quasi-isometric} 
if, for a given $L>0, A\geq 0$, there is map $f: X\to Y$ such that the following are true:
\begin{enumerate}
  \item The map $f$ satisfies 
        $$-A + \frac{1}{L}d_X(x_1,x_2)\leq d_Y(f(x_1),f(x_2))\leq Ld_X(x_1,x_2)+A$$
        for all $x_1,x_2\in X$.
  \item There is a map $\overline{f}: Y\to X$ such that 
        $$-A + \frac{1}{L}d_Y(y_1,y_2)\leq d_X(\overline{f}(y_1),\overline{f}(y_2))
               \leq Ld_Y(y_1,y_2)+A$$
        for all $y_1,y_2\in Y$.
  \item The maps $\overline{f}$ and $f$ satisfy $d_X(x,\overline{f}{f}(x))\leq A$
         and $d_Y(y,f\overline{f}(y))\leq A$.
\end{enumerate}
If there is such an $f$, we call it an {\em $(L,A)$-quasi-isometry.}
We say a map $f:X \to Y$ is a {\em quasi-isometric embedding}
if it satisfies Property 1, but there is not necessarily a map $\overline{f}:Y\to X$
that satisfies Properties 2 and 3.
\end{defn}

The main result of this section is:
\begin{prop}
\label{p: Q and T are q-i}
For any finite set $X\subset \H^n$, there is a finite metric tree $T$
which is $(1,A)$-quasi-isometric to $Q(X)$. 
We may take $A=(20c+12)\ln 3$ for any $c>0$ such that 
$|X|\leq 2^c+2$.   Thus $A$ depends only on the cardinality of $X$.  
(We build the quasi-isometry $P:Q(X)\to T$ in 
 Lemma \ref{l: refinement of a q-i relation}.)
\end{prop}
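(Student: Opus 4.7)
The plan is to construct $T$ as the Gromov approximating tree for $X$ and then extend the resulting map $X\to T$ to a map $P:Q(X)\to T$ defined edge by edge. The verification of the $(1,A)$-quasi-isometry bound then reduces to combining the standard tree-approximation estimate on $X$ with $\delta$-thin quadrilateral estimates for pairs of interior points on different edges of $Q(X)$.

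First I would appeal to Gromov's tree approximation theorem: for any $\delta$-hyperbolic geodesic space $Y$ and any finite subset $X\subset Y$ of cardinality at most $2^c+2$, there is a finite metric $\R$-tree $T$ and a map $\phi:X\to T$ satisfying $d(x,y)-Kc\delta\leq d_T(\phi(x),\phi(y))\leq d(x,y)$ for every pair $x,y\in X$, where $K$ is a small explicit constant. The proof, due to Gromov, proceeds by induction on $|X|$ using tripod approximations to geodesic triangles, each of which is $\delta$-thin. Since $\H^n$ is $\mathrm{CAT}(-1)$, the hyperbolicity constant $\delta=\ln 3$ is available, which is what produces the $c\ln 3$-scale distortion in the final bound.

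Next I would extend $\phi$ to $P:Q(X)\to T$ edge by edge. For each pair $x,y\in X$, parametrize both the hyperbolic segment $[xy]_\H\subset Q(X)$ and the tree arc $[\phi(x)\phi(y)]_T$ by arc length, and define $P$ on $[xy]_\H$ by linearly matching arc-length parameters, absorbing the small length mismatch $|d_{\H^n}(x,y)-d_T(\phi(x),\phi(y))|$ which is already bounded by the Gromov estimate. Lemma \ref{l: refinement of a q-i relation}, to which the author defers, presumably enforces consistency on overlaps between edges. A coarse inverse $\overline{P}:T\to Q(X)$ is then defined symmetrically by pulling back each tree arc to the matching edge of $Q(X)$.

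To verify the quasi-isometry inequalities I would bound $|d_{\H^n}(p,q)-d_T(P(p),P(q))|$ for $p\in[xy]_\H$ and $q\in[x'y']_\H$. Applying $\delta$-thinness to the geodesic quadrilateral on $\{x,y,x',y'\}\subset\H^n$, one writes $d_{\H^n}(p,q)$ up to additive error $O(\delta)$ as a sum of segment lengths whose tree counterparts sum to exactly $d_T(P(p),P(q))$; combining this with the Gromov tree bound on $X$ yields an additive error linear in $c\ln 3$. The same argument, run in reverse, controls $\overline{P}$, and the composition estimates in Definition \ref{d: q-i} follow from the same thin-quadrilateral accounting.

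The main obstacle is tracking explicit constants down to the claimed $A=(20c+12)\ln 3$. Several independent contributions must be added together: the $O(c\delta)$ distortion from Gromov's tree theorem on $X$, the $O(\delta)$ error from each thin-quadrilateral estimate used to pass from endpoints to interior points of $Q(X)$, the distortion introduced by linearly rescaling each edge to match the tree-arc length, and the symmetric error incurred by the coarse inverse. The coefficient $20$ on $c$ and the additive constant $12$ emerge only after these terms are summed with the correct multiplicities; the geometric content is standard, but it is the quantitative bookkeeping that is delicate and that gives the proposition its effective form.
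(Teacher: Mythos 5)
Your overall strategy is the same as the paper's: take $T$ to be the Gromov approximating tree for $X$, extend the map $p:X\to T$ over the edges of $Q(X)$, and control the distortion using $\delta$-thinness ($\delta=\ln 3$) together with the $2c\delta$ distortion of $p$ on $X$. The mechanical differences are minor: the paper extends over each edge by choosing a triangle $\Delta\subset Q(X)$ containing it and using the map $p_\Delta$ that matches the internal point of the edge to the branch point of the comparison tripod (Proposition \ref{p: triangles in Q}), assembles these into a multi-valued relation $P'$ whose value-sets have diameter at most $A'=8c\delta+4\delta$, and then refines $P'$ to a genuine map (Lemma \ref{l: refinement of a q-i relation}); you instead map each edge affinely onto the corresponding tree arc and compare points on different edges through a thin quadrilateral on the four endpoints. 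That route can be made to work, but note that the refinement lemma you cite is not about ``consistency on overlaps between edges'' (edges of $Q(X)$ meet only at points of $X$, where any reasonable extension agrees with $p$); its role is to select single values from the relation $P'$ while keeping the error bounded by $A+2A'$.

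The genuine gap is that you never derive the stated constant $A=(20c+12)\ln 3$, and for this proposition the explicit constant is the content: it feeds directly into $R$ and $C_n(k)$ in Section \ref{s: small decompositions}, so an unquantified ``additive error linear in $c\delta$'' does not prove the statement as claimed. In the paper the number comes from concrete accounting: each triangle map is a $(1,4c\delta+2\delta)$-quasi-isometry; two points of $Q(X)$ on disjoint edges can be moved distance at most $\delta$ each into a common triangle, giving the pairwise estimate $4c\delta+4\delta$ (Proposition \ref{p: relation P'}, Property 2); the disagreement between two triangle maps at a common point is at most $A'=8c\delta+4\delta$ (Property 1); and the refined map then satisfies the embedding inequality with constant $A+2A'=20c\delta+12\delta$. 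Your sketch lists the sources of error but assigns them no multiplicities, and your quadrilateral comparison must also absorb the discrepancy between where the tree branch point sits on an edge's image and where the corresponding internal point sits on the edge in $\H^n$ --- an error of order $c\delta$, not merely $\delta$. To complete the proof you must either carry out an accounting of this kind for your affine-per-edge map and verify it does not exceed $20c\delta+12\delta$, or follow the paper's relation-and-refinement bookkeeping.
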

 
The proof of Proposition \ref{p: Q and T are q-i} uses the quasi-isometry of a hyperbolic triangle and its
comparison tripod (Definition \ref{d: comparison tripod}).  We construct the 
quasi-isometry by first
considering the union of maps from individual triangles in $Q(X)$ to tripods 
in $T$.  The union forms a relation, and we refine
the relation into the map $P$.  We take $T$ to be the Gromov approximating tree (see Definition \ref{d: GAT}).

\begin{defn}
\label{d: comparison tripod}
Let $x_1,x_2,x_3 \in \H^n$, and let $\Delta$ denote the triangle $[x_1x_2x_3]$.  
The unique tripod $\tau_\Delta$ with endpoints $a_1,a_2,a_3$ such
that $d_\H(x_i,x_j)=d_T(a_i,a_j)$ for all $i,j$
called the {\em comparison tripod} of $\Delta$.

Let $\chi_\Delta$ be the unique map sending $x_i\in X$ to $a_i\in \tau_\Delta$ 
for each $i$ and which restricts to an isometry along each edge $[x_ix_j]\in X$. 
\end{defn}

\begin{defn}
\label{d: d-thin}
Fix $\delta \geq 0$.
We say that $\Delta$ is {\em $\delta$-thin} if the preimage $y,z\in \chi_\Delta^{-1}(x)$
satisfies $d_X(y,z)\leq \delta$ for all $x\in X$. 
\end{defn}

\begin{prop}[{\cite[4.3]{CDP}}]
\label{p: Hn is delta-hyperbolic}
All triangles in real hyperbolic $n$-space $\H^n$ are delta-hyperbolic
for $\delta=\ln{3}$.  
\end{prop}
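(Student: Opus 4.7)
The plan is to reduce to an explicit computation in a single ideal triangle of $\H^2$. First, any triangle $[x_1x_2x_3]$ in $\H^n$ lies in a totally geodesic subspace isometric to $\H^2$ (or $\H^1$, in which case $\chi_\Delta$ is an isometry and the statement is vacuous), so I may assume $n=2$.

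Next I would argue that only the preimage of the central vertex $c$ of the tripod $\tau_\Delta$ matters. A general point $p$ on the leg of $\tau_\Delta$ toward the endpoint $a_i$ has a two-element preimage $\{y,z\}$ under $\chi_\Delta$, with one point on each of the two sides of $\Delta$ incident to $x_i$, both at equal distance from $x_i$. Parametrizing those two sides proportionally to arc length based at $x_i$ and applying Lemma~\ref{c: convexity of distance}, the function $t \mapsto d_\H(y(t),z(t))$ is convex and vanishes at $t=0$, hence is non-decreasing in $t$. Therefore the diameter of the preimage of any tripod point is maximized at $p=c$, and it suffices to bound the mutual distances of the three ``internal points'' $q_1,q_2,q_3$ on the sides of $\Delta$.

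I would then reduce to ideal triangles by monotonicity: pushing any vertex $x_i$ to infinity along one of its incident sides only increases the pairwise distances $d_\H(q_j,q_k)$. Iterating, every triangle is dominated by the unique (up to isometry) ideal triangle in $\H^2$. Placing the ideal vertices at $0,1,\infty$ in the upper half-plane model, I locate the inscribed circle at hyperbolic center $(1/2,\sqrt{3}/2)$ by symmetry and the tangency conditions; the tangency points with the three sides work out to $(0,1)$, $(1,1)$ and $(1/2,1/2)$. The upper half-plane distance formula
$$\cosh d\bigl((x_1,y_1),(x_2,y_2)\bigr)=1+\frac{(x_1-x_2)^2+(y_1-y_2)^2}{2y_1y_2}$$
returns each pairwise distance as $\cosh^{-1}(3/2)=\ln\bigl((3+\sqrt{5})/2\bigr)$, which is strictly less than $\ln 3$, yielding the claim.

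The main obstacle is the monotonicity step in the reduction to ideal triangles. One must verify, using the explicit description of the internal points in terms of the tripod leg lengths $\frac{1}{2}(a+b-c),\frac{1}{2}(a+c-b),\frac{1}{2}(b+c-a)$ (where $a,b,c$ are the side lengths of $\Delta$), that extending any vertex along a side toward infinity does not decrease any $d_\H(q_j,q_k)$. This is essentially the content of the relevant portion of \cite{CDP}, and a self-contained write-up here would recapitulate those computations.
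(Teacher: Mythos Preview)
The paper does not supply a proof of this proposition; it simply cites \cite[4.3]{CDP} and moves on. So there is no in-paper argument to compare your attempt against, and your proposal is effectively a reconstruction of the cited result.

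Your outline is the standard route to the thinness constant for $\H^n$, and the endpoint computation is correct: the ideal triangle has internal-point diameter $\cosh^{-1}(3/2)=\ln\bigl((3+\sqrt{5})/2\bigr)<\ln 3$, so $\ln 3$ is a valid (non-sharp) value of $\delta$. The reduction in your second paragraph to the three internal points is also sound, though your appeal to Lemma~\ref{c: convexity of distance} needs a small adjustment: the two preimage points under $\chi_\Delta$ sit at equal \emph{arc-length} distance from $x_i$, not at equal \emph{proportional} parameter, so when the two incident sides have different lengths you must use the unit-speed version of the convexity statement (which holds in any CAT(0) space; extend the shorter side and rescale, or cite the same source in \cite{BH}). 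With $f(0)=0$ and $f\geq 0$, convexity then gives $f(r)\le (r/r')f(r')$ for $r\le r'$, hence monotonicity, as you claim.

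The genuine gap is the one you yourself name: the reduction to the ideal triangle. ``Pushing $x_i$ to infinity along one of its incident sides'' changes all three side lengths and both of the other angles simultaneously, and it is not transparent that each pairwise distance $d_\H(q_j,q_k)$ moves monotonically under this deformation. One clean way to organize the computation is to note that the internal points coincide with the tangency points of the incircle, so that for the pair near $x_i$ the hyperbolic law of cosines gives
\[
\cosh d_\H(q_j,q_k)=1+(1-\cos A)\sinh^2(s-a),
\]
with $A$ the angle at $x_i$ and $s$ the semiperimeter; the task is then to maximize this over all hyperbolic triangles and see that the supremum is attained in the ideal limit. That maximization is exactly the content of the passage in \cite{CDP}, and without carrying it out (or giving an alternative argument) your proposal remains an outline rather than a proof.
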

Hence we say that $\H^n$ is $\delta$-hyperbolic for $\delta=\ln 3$.

\begin{defn}
\label{d: GAT}
Let $X$ be a finite subset of $\H^n$
with cardinality $|X|\leq 2^c+ 2$ for $c>0$.
We say that a pair $(T,p:X\to T)$ 
is a {\em Gromov approximating tree} if $T$ is a finite metric tree and $p:X\to T$
has the following properties:
\begin{enumerate}
  \item The map $p: X \to T$ sends points in $X$ to vertices of $T$, and 
    $$\partial T \subset p(X),$$ where $\partial T$ denotes the leaves of $T$ (the valence-one vertices).
  \item The distance between pairs of points is quasi-preserved and $p$ 
   does not increase distance:
    $$d_\H(x_1,x_2)-2c\delta \leq d_T(p(x_1),p(x_2))$$
  for all $x_1,x_2\in X$.  
\end{enumerate}
\end{defn}
We sometimes abbreviate {\em Gromov approximating tree} to {\em approximating tree}.

\begin{thm}[{\cite[Section 2.1]{GD}}]
\label{t: GAT}
Let $X$ be a finite subset of $\H^n$ and
cardinality $|X|\leq 2^c+2$ for $c>0$.  Then there exists
a pair $(T,p)$ with the properties described in 
Definition \ref{d: GAT}.
\end{thm}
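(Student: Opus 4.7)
The plan is to invoke the standard Gromov tree-approximation construction for $\delta$-hyperbolic spaces. The essential tool is the \emph{Gromov product}
\begin{equation*}
(x|y)_w \;=\; \tfrac{1}{2}\bigl(d(x,w)+d(y,w)-d(x,y)\bigr),
\end{equation*}
together with the \emph{four-point condition} $(x|y)_w \geq \min\{(x|z)_w,(z|y)_w\} - \delta$, which is equivalent to $\delta$-hyperbolicity and holds in $\H^n$ with $\delta = \ln 3$ by Proposition \ref{p: Hn is delta-hyperbolic}. In a genuine metric tree this inequality holds with $\delta = 0$ and the Gromov products around a basepoint determine the tree completely; the task is therefore to construct a tree whose Gromov products agree with those of $X$ up to an additive error of order $c\delta$.

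I would proceed by induction on $c$. The base case $|X| \leq 3$ is handled by taking $T$ to be the edge or comparison tripod of Definition \ref{d: comparison tripod}, which satisfies both properties with zero error by Proposition \ref{p: Hn is delta-hyperbolic}. For the inductive step, fix a basepoint $w \in X$ and split $X \setminus \{w\}$ into two subsets $X_A$ and $X_B$ of cardinality at most $2^c + 1$ chosen so that within-part Gromov products $(x|y)_w$ are uniformly larger than cross-part Gromov products, with a common threshold $\mu$ separating the two regimes. Applying the inductive hypothesis to $\{w\} \cup X_A$ and $\{w\} \cup X_B$ produces trees $(T_A, p_A)$ and $(T_B, p_B)$ with error at most $2c\delta$. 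I would then glue $T_A$ and $T_B$ along initial segments emanating from $p_A(w)$ and $p_B(w)$, with the length of identification equal to $\mu$, and set $p|_{X_A} = p_A$, $p|_{X_B} = p_B$.

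The verification that the resulting $p: X \to T$ is distance-decreasing and satisfies property (2) of Definition \ref{d: GAT} reduces to two cases. For pairs on the same side of the partition the inductive hypothesis applies directly to $(T_A, p_A)$ or $(T_B, p_B)$. For cross pairs $x \in X_A$, $y \in X_B$, the construction forces $(p(x)|p(y))_{p(w)} = \mu$ in $T$; by the partition choice and one application of the four-point condition this agrees with $(x|y)_w$ up to $\delta$. Combining this with the inductive estimates on $d_T(p(x), p(w))$ and $d_T(p(y), p(w))$ yields the desired bound $d_\H(x, y) - 2(c+1)\delta \leq d_T(p(x), p(y))$ for cross pairs, closing the induction.

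The main obstacle is the existence of the balanced partition with well-separated within-part and cross-part Gromov products. The standard argument, carried out in \cite{GD}, iteratively picks a pair realizing the largest remaining Gromov product, collects everything sufficiently correlated with it, and uses the four-point condition to show that the resulting parts have controlled size and that cross Gromov products cluster within $\delta$ of a single threshold. Once this combinatorial step is in place, the error analysis is clean and produces exactly the claimed constant $2c\delta$.
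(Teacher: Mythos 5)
The paper itself does not prove this statement: Theorem \ref{t: GAT} is quoted directly from \cite{GD}, so what you are really attempting is a reconstruction of the classical tree-approximation argument. Your overall shape (dyadic recursion contributing $2\delta$ of error per level, with recursion depth $c\approx\log_2|X|$) is the right one, but the step you yourself call the main obstacle is, as you have formulated it, not just unproved but false, and it is the crux. You ask for a partition of $X\setminus\{w\}$ into two parts of cardinality at most $2^c+1$ such that all cross-part Gromov products based at $w$ cluster within $O(\delta)$ of a single threshold $\mu$ lying below all within-part products. Take $x_1,\dots,x_m$ on a single geodesic ray issuing from $w$ with $d(w,x_i)=10\delta i$, so that $(x_i|x_j)_w=\min\bigl(d(w,x_i),d(w,x_j)\bigr)$ exactly. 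A short check shows that the cross values of any partition contain every point of one part together with all points of the other part lying below the first part's maximum; for these to cluster in a band of width $O(\delta)$, one part must consist of the single point nearest $w$. So the only partitions with the separation property are maximally unbalanced, the recursion depth becomes linear in $|X|$ rather than $c$, and the $2c\delta$ bound of Definition \ref{d: GAT} is lost; conversely, any balanced partition of this configuration has cross products spread over a range of order $m$, so gluing the two subtrees at a single branch point of depth $\mu$ produces tree distances wildly larger than the true ones (the correct tree here is a path, not a wedge). The greedy clustering you attribute to \cite{GD} (grow a part around the pair with the largest remaining product) gives no cardinality control and cannot, since this example admits no balanced separated partition at all; the construction in \cite{GD} is organized differently precisely so that such aligned configurations are handled.

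A secondary problem is the error bookkeeping even where the partition exists. For a cross pair you compute $d_T(p(x),p(y))=d_T(p(x),p(w))+d_T(p(y),p(w))-2\mu$; if the inductive maps only satisfy $d_\H(x,w)-2c\delta\le d_T(p(x),p(w))\le d_\H(x,w)$, the two basepoint distances each contribute up to $2c\delta$, giving a total error of order $4c\delta$ rather than the claimed $2(c+1)\delta$. Closing the induction with the stated constant requires strengthening the inductive hypothesis so that distances to the basepoint are preserved exactly (as in the classical construction, where the map is an isometry on each segment $[w,x_i]$), and this normalization must be shown to survive the gluing. Until the partition/gluing step is replaced by a mechanism that also treats configurations with no product gap, the proposal does not establish the theorem.
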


\begin{rmk}
Gromov's construction applies to finite $\delta$-hyperbolic spaces
and finite sets of rays in $\delta$-hyperbolic spaces.  
\end{rmk}

\subsection{Triangles in $Q(X)$}
\label{s: triangles in Q}
Let $X$ be a finite subset of $\H^n$
with cardinality $|X|\leq 2^c+2$ for $c>0$, and let $(T,p)$
be a Gromov approximating tree for $X$.
Such a tree always exists by Theorem \ref{t: GAT}.
We show below that triangles in $Q(X)$ are uniformly quasi-isometric
to triangles in $T$
by extending $p:X\to T$ to 
triangles in $Q(X)$.

Let $[xy]_T$ denote
the geodesic segment between two points $x,y$ in a tree $T$, and
Let $[xyz]_T$ denote the tripod in $T$ with leaves $x$, $y$, $z$.
We suppress subscripts where there is no ambiguity.

Given $x_1,x_2,x_3 \in X$, let $\Delta$ be the triangle 
$[x_1x_2x_3]\subset \H^n$.  
Let $$T_\Delta=[p(x_1)p(x_2)p(x_3)] \subset T$$ be the tripod
in $T$ with leaves $p(x_1)$, $p(x_2)$, $p(x_3)$.   Let 
$$o=[p(x_1)p(x_2)]\cap[p(x_2)p(x_3)]\cap[p(x_3)p(x_1)]$$
be the branch point  of $T_\Delta$.  
Let $\tau_\Delta=[a_1a_2a_3]$ 
be the comparison
tripod (Definition \ref{d: comparison tripod}) of $\Delta$, and let 
$o_\tau=[a_1a_2]\cap[a_2a_3]\cap[a_3a_1]$ be the branch point of $\tau_\Delta$.
Defining $\chi_\Delta:\Delta\to \tau_\Delta$  as in Definition
\ref{d: comparison tripod},
we call the points in $\chi_\Delta^{-1}(o_\tau)$ 
the {\em internal points}. Note that there is one internal
point for each side $[x_ix_j]$. We label them $o_{12}, o_{23}, o_{13}$,
where $o_{ij}$ is contained in the side $[x_ix_j]$.

We extend $p|_{\{x_1,x_2,x_3\}}$ to the unique map $p_\Delta:\Delta\to T_\Delta$ that 
 \begin{enumerate}
   \item sends $x_i$ to $p(x_i)$ for each $i$
   \item sends the point $o_{ij}$ to $o$ for each $i,j$
   \item maps the segment $[x_io_{ij}]\subset \Delta$ to the segment
      $[p(x_i)o]\subset T_\Delta$ as a dilation:
        $$d_T(p(x_i),p_\Delta(x))=\frac{d_T(p(x_i),o)}{d_\H(x_i,o_{ij})}\cdot d(x_i,x).$$
 \end{enumerate}

\begin{prop}
\label{p: triangles in Q}
The map $p_\Delta:\Delta \to T_\Delta$ is a
$(1,4c\delta+2\delta)$-quasi-isometry, where $\Delta$ is given
the subspace metric from $\Delta\subset \H^n$.
\end{prop}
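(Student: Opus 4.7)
The plan is to factor $p_\Delta = \phi \circ \chi_\Delta$ through the comparison tripod $\tau_\Delta$. Here $\chi_\Delta : \Delta \to \tau_\Delta$ is the map of Definition~\ref{d: comparison tripod}, isometric on each side of $\Delta$, and $\phi : \tau_\Delta \to T_\Delta$ is the unique map sending $a_i \mapsto p(x_i)$, $o_\tau \mapsto o$, and dilating each leg of $\tau_\Delta$ linearly onto the corresponding leg of $T_\Delta$. Controlling each factor separately then gives the desired estimates on the composition.

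For $\phi$, the key estimate is that each leg length changes by a bounded amount. Writing the leg lengths as Gromov products,
\[ \ell_i = d_\H(x_i, o_{ij}) = \tfrac{1}{2}\bigl(d_\H(x_i, x_j) + d_\H(x_i, x_k) - d_\H(x_j, x_k)\bigr), \]
\[ \ell_i' = d_T(p(x_i), o) = \tfrac{1}{2}\bigl(d_T(p(x_i), p(x_j)) + d_T(p(x_i), p(x_k)) - d_T(p(x_j), p(x_k))\bigr), \]
and using $d_\H(x_a, x_b) - 2c\delta \leq d_T(p(x_a), p(x_b)) \leq d_\H(x_a, x_b)$ from Definition~\ref{d: GAT}, a signed-error computation gives $\ell_i' - \ell_i \in [-2c\delta,\, c\delta]$, hence $|\ell_i' - \ell_i| \leq 2c\delta$. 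Because $\phi$ is a dilation by factor $\ell_i' / \ell_i$ on the $a_i$-leg, for any $y_1, y_2 \in \tau_\Delta$ the discrepancy $|d_{T_\Delta}(\phi(y_1), \phi(y_2)) - d_{\tau_\Delta}(y_1, y_2)|$ is bounded by the sum of $|\ell_i' - \ell_i|$ over the at most two legs that the tripod geodesic traverses, for a total of at most $4c\delta$.

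For $\chi_\Delta$, I would bound $|d_\H(y_1, y_2) - d_{\tau_\Delta}(\chi_\Delta(y_1), \chi_\Delta(y_2))| \leq 2\delta$ using $\delta$-thinness of $\Delta$ from Proposition~\ref{p: Hn is delta-hyperbolic}. When $y_1, y_2$ lie on the same side the map is an isometry, so the error is $0$. When they lie on different sides, I would replace $y_1$ and/or $y_2$ by its thin-triangle partner on an appropriately chosen common side, so that the resulting pair lies on a single geodesic segment of $\Delta$ whose length computes $d_{\tau_\Delta}(\chi_\Delta(y_1), \chi_\Delta(y_2))$ exactly; each substitution costs at most $\delta$ by the triangle inequality, for a total of at most $2\delta$.

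Adding the two estimates yields condition~(1) of Definition~\ref{d: q-i} with $A = 4c\delta + 2\delta$. For conditions~(2) and~(3), I would take $\overline{p}_\Delta : T_\Delta \to \Delta$ to be any set-theoretic right inverse of $p_\Delta$ (a point on a leg of $T_\Delta$ has at most two preimages in $\Delta$, which are $\delta$-thin partners on the two sides of $\Delta$ meeting at the relevant vertex $x_i$). Then (2) follows immediately by applying (1) to $\overline{p}_\Delta(t_1), \overline{p}_\Delta(t_2)$, and (3) holds because $p_\Delta \circ \overline{p}_\Delta = \mathrm{id}$ while $\overline{p}_\Delta \circ p_\Delta$ moves each point by at most $\delta \leq A$. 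The main subtlety I expect is the case analysis for $\chi_\Delta$: one must check, over all configurations of which half of which side contains $y_1$ and $y_2$, that a thin-partner substitution recovers the tripod distance exactly so that only the $\delta$-cost of each substitution contributes to the error.
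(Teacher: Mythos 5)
Your proof is correct and follows essentially the same route as the paper: the leg-length comparison via Gromov products together with the $2c\delta$ tree-approximation inequality gives the $4c\delta$ term, and $\delta$-thinness reduces an arbitrary pair of points to a common side at cost $2\delta$. Factoring through the comparison tripod $\tau_\Delta$ and writing out an explicit quasi-inverse merely make explicit what the paper does directly (edge-by-edge estimate plus surjectivity), so there is no substantive difference.
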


\begin{proof}
The map $p_\Delta$ restricts to a $(1,4c\delta)$-quasi-isometric embedding
along an edge $[x_ix_j]$ in $\Delta$.  To show this, we express the distance $d(p(x_i),o)$ 
as 
$$   d_T(p(x_i),o)=\frac{1}{2}(d_T(p(x_i),p(x_{i+1}))+d_T(p(x_i),p(x_{i-1}))-d_T(p(x_{i+1}),p(x_{i-1}))),$$
and combine this with the equations
$$   d_\H(x_i,o_{ij})=\frac{1}{2}(d_\H(x_i,x_{i+1})+d_\H(x_i,x_{i-1})-d_\H(x_{i+1},x_{i-1}))$$
and
$$ d(x_i,x_j)-2c\delta \leq d(p(x_i),p(x_j)) \leq d(x_i,x_j).$$

To complete the proof, we consider any $x,y\in\Delta$.  In a 
$\delta$-thin triangle, it is always possible
to find $x'\in\Delta$ and $y\in \Delta$ so that $x'$, $y'$ lie along a common
side and $d_\H(x,x')\leq \delta$ and  $d_\H(y,y')\leq\delta$.  
Hence the map $p_\Delta$ is a 
$(1, 4c\delta+2\delta)$-quasi-isometric embedding.   Since $p_\Delta$
is surjective, it is a quasi-isometry.
\end{proof}
\subsection{Constructing the quasi-isometry between $Q(X)$ and $T$}
\label{s: constructing q-i for Q and T}

To construct the map needed for Proposition \ref{p: Q and T are q-i}, 
we use the triangle maps constructed in Section \ref{s: triangles in Q} 
to build a relation $P':Q(X)\to T$, and then refine the relation
into the desired map.

We define the
relation $$P':Q(X)\to T$$
as follows: given $x\in Q(X)$, the image of $x$ is the set
    $$\{ p_\Delta(x) \in T \mid 
           x\in \Delta \subset Q(X)\}$$
where $p_\Delta$ is defined as in Section \ref{s: triangles in Q}.
Note that for each $x\in X$, the point $p(x)$ is contained in the image set $P'(x)$.

\begin{prop}
\label{p: relation P'}
The relation $P':Q(X)\to T$ satisfies the following three properties:
 \begin{enumerate}
 \item There exists a constant $A'$ that uniformly bounds 
    the diameter of $P'(x)$ for all $x$.
 \item There exists $A$ so that for each pair of points $x_1, x_2\in Q(X)$, 
  we can find $y_1\in P'(x_1)$ and $y_2\in P'(x_2)$ so that
  $$d(x_1,x_2)-A\leq d(y_1,y_2)\leq d(x_1,x_2)+A.$$
We may take $A=4c\delta+4\delta$ and $A'=8c\delta+4\delta$,
so $A$ and $A'$ depend only on the cardinality of $X$.
  \item The relation $P'$ is surjective.
 \end{enumerate}
\end{prop}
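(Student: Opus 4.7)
My plan is to verify each of the three properties in turn, using Proposition \ref{p: triangles in Q} (which gives that $p_\Delta:\Delta\to T_\Delta$ is a $(1,4c\delta+2\delta)$-quasi-isometry for each triangle $\Delta$) together with the defining properties of the Gromov approximating tree.

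For property (1), the key observation is that a point $x$ on an edge $[x_ax_b]\subset Q(X)$ belongs to a triangle of $Q(X)$ precisely when that triangle has $[x_ax_b]$ as one of its sides, so $\Delta=[x_ax_bx_k]$ for some $x_k\in X$. Each such $p_\Delta$ sends the edge $[x_ax_b]$ into the tree geodesic $[p(x_a)p(x_b)]$ in $T$, and hence all images $p_\Delta(x)$ lie on one common geodesic. By the quasi-isometry estimate, the distance from $p(x_a)$ to $p_\Delta(x)$ along this geodesic differs from $d_\H(x_a,x)$ by at most $4c\delta+2\delta$, independently of $\Delta$. Two such positions on the common geodesic therefore differ by at most $2(4c\delta+2\delta)=8c\delta+4\delta$, giving $A'$.

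For property (2), consider $x_1\in [x_ax_b]$ and $x_2\in [x_cx_d]$. If these edges coincide or share a vertex, then some triangle $\Delta$ of $Q(X)$ contains both points; taking $y_i=p_\Delta(x_i)$ and applying the quasi-isometry bound for $p_\Delta$ gives $|d_T(y_1,y_2)-d_\H(x_1,x_2)|\leq 4c\delta+2\delta\leq A$. If the edges are disjoint, I would take $\Delta_1=[x_ax_bx_c]$ containing $x_1$ and $\Delta_2=[x_ax_cx_d]$ containing $x_2$, which share the edge $[x_ax_c]$, and set $y_i=p_{\Delta_i}(x_i)$. Since $y_1\in [p(x_a)p(x_b)]$ and $y_2\in [p(x_c)p(x_d)]$ in $T$, the tree distance $d_T(y_1,y_2)$ decomposes as a sum of segments in the subtree spanned by $\{p(x_a),p(x_b),p(x_c),p(x_d)\}$. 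I would then compare each such segment length to the corresponding hyperbolic distance via the quasi-isometry of $p_{\Delta_1}$ or $p_{\Delta_2}$, and use $\delta$-hyperbolicity of $\H^n$ to rewrite the combined hyperbolic distances as $d_\H(x_1,x_2)$ up to an additive error, which I expect to stay within $A=4c\delta+4\delta$.

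For property (3), the image of $P'$ equals $\bigcup_\Delta T_\Delta$, the union over all triples of points in $X$ of the tripods spanned by their images in $T$. This union is the convex hull of $p(X)$ in $T$; since $\partial T\subset p(X)$, every leaf of $T$ lies in $p(X)$, so the convex hull exhausts $T$ and $P'$ is surjective. The main obstacle in this whole argument is the disjoint-edges case of property (2): no single triangle of $Q(X)$ simultaneously contains $x_1$ and $x_2$, so the two separate quasi-isometry bounds from $\Delta_1$ and $\Delta_2$ must be chained carefully through the shared edge $[x_ax_c]$ without blowing up the constant.
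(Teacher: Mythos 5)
Your handling of Properties (1) and (3) is correct and essentially the paper's argument: for (1), all the images $p_\Delta(x)$ lie on the common geodesic $[p(x_a)p(x_b)]$ and each sits within $4c\delta+2\delta$ of the position determined by $d_\H(x_a,x)$, giving $A'=8c\delta+4\delta$; for (3), the image of $P'$ covers every segment of $T$ between points of $p(X)$, and $\partial T\subset p(X)$.

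The genuine gap is exactly the one you flag yourself: the disjoint-edges case of Property (2) is never proved, only anticipated (``which I expect to stay within $A$''). The paper does not chain two triangle maps through the shared diagonal. It uses $\delta$-thinness once, in $\H^n$: since the triangle $[x_ax_cx_d]$ is $\delta$-thin, the point $x_2\in[x_cx_d]$ lies within $\delta$ of a point $x_2'$ on $[x_ax_c]$ or on $[x_ax_d]$, and in either case $x_2'$ lies in a triangle $\Delta'$ of $Q(X)$ that also contains $x_1$ (namely $[x_ax_bx_c]$ or $[x_ax_bx_d]$). A single application of Proposition \ref{p: triangles in Q} inside $\Delta'$, combined with the $\leq\delta$ perturbation, yields the stated $A=4c\delta+4\delta$. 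Your chaining plan instead pays the $4c\delta+2\delta$ error once for $p_{\Delta_1}$ and once for $p_{\Delta_2}$, must reconcile the two maps on the common side $[x_ax_c]$ (where, by your own Property (1) argument, they may disagree by as much as $8c\delta+4\delta$), and must still convert $d_\H(x_1,z)+d_\H(z,x_2)$ for a crossing point $z$ of the diagonal back into $d_\H(x_1,x_2)$; even if this can be pushed through, the additive constant it produces is visibly larger than $4c\delta+4\delta$, and the exact value of $A$ matters downstream, since $A+2A'=20c\delta+12\delta$ is precisely the constant used in Proposition \ref{p: Q and T are q-i} and in the later sections. So either adopt the one-triangle thinness argument above, or accept a larger $A$ and propagate the changed constant through the rest of the paper.
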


If we can show Proposition \ref{p: relation P'}, when we can use
the following to complete the proof of Proposition \ref{p: Q and T are q-i}.

\begin{lemma}
\label{l: refinement of a q-i relation}
Let $e$ be an edge of $Q(X)$.
Suppose a relation $P':Q(X)\to T$ satisfies the conditions of 
Proposition \ref{p: relation P'}.
Then there is a quasi-isometry $P:Q(X)\to T$ so that $P(x)\in P'(x)$ 
for all $x$, the vertices of $T$ are contained in the image of $P$, 
and $P$ is continuous on $e$.
\end{lemma}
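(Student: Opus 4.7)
The plan is to realise $P$ as a single-valued selection from the relation $P'$, arranged so that continuity holds along the distinguished edge $e$, every vertex of $T$ appears in the image, and the quasi-isometry constants degrade only by the uniform bounds $A$ and $A'$ given in Proposition \ref{p: relation P'}.

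On the edge $e = [x_ix_j]$, I would fix a reference triangle $\Delta_e = [x_ix_jx_k]$ (with $x_k \in X$) and define $P|_e := p_{\Delta_e}|_e$. By the construction in Section \ref{s: triangles in Q}, this map is a concatenation of two dilations meeting at the internal point $o_{ij}$, so it is continuous on $e$; it sends $x_i, x_j$ to $p(x_i), p(x_j)$; and $p_{\Delta_e}(x) \in P'(x)$ holds for every $x \in e$ by the very definition of $P'$. At each vertex $x \in X$ of $Q(X)$ not already constrained, I would set $P(x) := p(x)$; this automatically places every leaf of $T$ in the image, since $\partial T \subset p(X)$ by Definition \ref{d: GAT}. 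For each remaining (internal) vertex $v$ of $T$, surjectivity of $P'$ provides a point $y_v \in Q(X)$ with $v \in P'(y_v)$; I would choose such a $y_v$ outside the interior of $e$ whenever possible and set $P(y_v) := v$. If the only points with $v$ in their $P'$-fibre lie on $e$, then the original choice of $\Delta_e$ is replaced by a triangle $\Delta$ for which $o_\Delta = v$, which is possible because such a $v$ is a branch point of the tripod $T_\Delta$ for some triangle $\Delta$ sharing $e$. At all remaining points of $Q(X)$, $P$ is extended by any pointwise selection from $P'$.

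To verify the quasi-isometry estimate, I would combine properties (1) and (2) of Proposition \ref{p: relation P'}. For any $x_1, x_2 \in Q(X)$, pick $y_\ell \in P'(x_\ell)$ realising property (2), so that $|d(x_1, x_2) - d(y_1, y_2)| \leq A$, and use property (1) to conclude $d(P(x_\ell), y_\ell) \leq A'$, yielding
$$
\bigl|d(P(x_1), P(x_2)) - d(x_1, x_2)\bigr| \leq A + 2A'.
$$
A quasi-inverse $\overline{P}\colon T \to Q(X)$ is constructed analogously: for each $z \in T$, surjectivity of $P'$ yields some $x$ with $z \in P'(x)$, and then $d(P(x), z) \leq A'$ delivers the required bounds. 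The main obstacle I expect is the coverage step: ensuring that every internal vertex of $T$ is realised by $P$ outside of $e$. This reduces to the combinatorial observation that each such vertex arises as the tripod branch point $o_\Delta$ of some triangle $\Delta \subset Q(X)$; since each $\Delta$ sits along three edges of $Q(X)$, there is enough freedom in choosing reference triangles to cover all internal vertices by a finite assignment, and the resulting $P$ satisfies all the required properties simultaneously.
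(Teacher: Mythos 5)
Your proposal is correct and takes essentially the same route as the paper: refine the relation $P'$ to a single-valued selection that agrees with one triangle map $p_{\Delta_e}$ along $e$ (giving continuity and $P(x)\in P'(x)$), and obtain the $(1,A+2A')$ estimate from Properties 1 and 2 of Proposition \ref{p: relation P'} together with a quasi-inverse from Property 3. The only divergence is the vertex-coverage bookkeeping: the paper defines $P$ edge-by-edge along the star of edges $[x_1x_i]$, so its image automatically contains $\bigcup_i [p(x_1)p(x_i)]_T=T$, while your fallback for vertices realized only on $e$ is actually unnecessary (for every triangle $\Delta$ containing $e$, the map $p_\Delta|_e$ already surjects onto $[p(x_i)p(x_j)]_T$, so such vertices lie in the image of $P|_e$ for any choice of reference triangle), and the points $y_v$ can be chosen pairwise distinct by fixing one triangle per edge, under which distinct vertices have disjoint preimages.
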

\begin{proof}[Proof of Lemma \ref{l: refinement of a q-i relation}]
We construct a map $P$, then use the assumed conditions of 
Proposition \ref{p: relation P'} to show that it satisfies
the desired quasi-isometric inequalities.

First note that by construction of $p_\Delta$, 
when $x\in X$, we have $P'(x)=p(x)$ (see Section
\ref{s: triangles in Q}).  So, to construct $P$ from $P'$,
we pick images for points along edges $[x_1x_2]\subset Q(X)$,
where $x_1,x_2\in X$.  Let the vertices of $X$ be $\{x_1,\dots,x_k\}$.
Since $X$ is a finite set,
there are finitely many edges in $Q(X)$; we order the edges 
$e_1, e_2, e_3$, \dots, etc. Without loss of generality, suppose
that $e=e_1$ and $x_1$ is a boundary vertex of $e_1$. We may choose the 
edges so that $e_i=[x_1x_i]$ for $2\leq i\leq k$.

  For each edge $e_i$ in $Q(X)$, we
pick a triangle $\Delta_i$ containing $e_i$.  Then, for each
point $x\in e_i\setminus \{e_1\cup \dots \cup e_{i-1}\}$, we
set $P(x)=p_{\Delta_i}(x)$. One can check that the image of
the map $P:Q(X)\to T$ thus
defined contains all vertices of $T$.  By construction,
$P$ is continuous on $e$.

To show that $P$ is a quasi-isometry, let $x_1, x_2 \in Q(X)$. 
Combining Property 1 and Property 2 of Proposition \ref{p: relation P'}, 
we have
   $$d(x_1,x_2)-(A+2A')\leq d(P(x_1),P(x_2))\leq d(x_1,x_2)+(A+2A').$$
Hence $P$ is a $(1,A+2A')$-quasi-isometric embedding. 
Property 3 of Proposition \ref{p: relation P'} allows us to construct
a quasi-inverse map; to show that the quasi-inverse inequalities are
satisfied, we use Properties 1 and 2 of \ref{p: relation P'}.
\end{proof}

\begin{proof}[Proof of Proposition \ref{p: relation P'}]
To show Property (1), let $x\in Q(X)$.  Let $\Delta_1, \Delta_2\subset Q(X)$
be two triangles containing $x$, and suppose that 
$[x_1x_2]\subset \Delta_1,\Delta_2$ is the edge of $Q(X)$ containing $x$.
By Proposition \ref{p: triangles in Q}, 
   $$d_\H(x,x_1)-4c\delta-2\delta\leq d_T(p_{\Delta_j}(x),x_1)\leq 
       d_\H(x,x_1)+4c\delta+2\delta.$$ 
Since $p_{\Delta_1}(x)$ and $p_{\Delta_2}(x)$ lie along the geodesic
$[p(x_1)p(x_2)]$, it follows that 
$$d(p_{\Delta_1}(x),p_{\Delta_2}(x))\leq 8c\delta+4\delta.$$
Hence we may take $A'=8c\delta+4\delta$.

To show Property (2), let $x_1,x_2\in Q(X)$. If $x_1$ and $x_2$ lie
along edges that share a boundary vertex $x\in X$, then the points $x_1$ and 
$x_2$ lie in a triangle $\Delta\subset Q(X)$.  In this case, Property (2) follows
from Proposition \ref{p: triangles in Q}.   If $x_1$ and $x_2$ lie
on disjoint edges in $Q(X)$, then they lie on opposite sides of a quadrilateral
in $Q(X)$.  In this case, it is possible to find two points $x_1'$ and $x_2'$
in a common triangle
so that $d_\H(x_i, x_i')\leq \delta$.  Let $\Delta'$
be a triangle containing $x_1'$ and $x_2'$. Then 
$$d(x_1,x_2)-4c\delta-4\delta\leq d(p_{\Delta'}(x_1),p_{\Delta'}(x_2))
\leq d(x_1, x_2)+4c\delta+4\delta.$$

To show Property (3), let $x_1,x_2\in X$ and $\Delta\subset Q(X)$ 
be a triangle in $Q(X)$ containing $[x_1x_2]$.  
Then $P'([x_1x_2])$ is the geodesic segment $[p(x_1)p(x_2)]_T$ of $T$.  
Since $p(X)$ contains all the leaves of $T$, the image $P'(Q(X))$ 
covers all geodesic segments between leaves of $T$.  Hence $P'$ is
surjective.
\end{proof}

\begin{proof}[Proof of Proposition \ref{p: Q and T are q-i}]
Let $P: Q(X)\to T$ 
be the map constructed in Lemma \ref{l: refinement of a q-i relation}.
Then $P$ is an extension of $p$.
It follows from Proposition \ref{p: relation P'} and 
Lemma \ref{l: refinement of a q-i relation} that $P$ is
a $(1,20c\delta+12\delta)$-quasi-isometry between $Q(X)$ and $P(Q(X))$.
\end{proof}

What we have shown can be summarized as:
\begin{prop}
\label{p: rephrased Q and T are q-i}
Let $e$ be an edge in $Q(X)$.  Then one can find a map $P_e: Q(X)\to T$
that is continuous on $e$, and with the property that
given $x\in Q(X)$, there is a triangle $\Delta_x \subset Q(X)$
where $x\in \Delta_x$ and $P_e(x)=p_{\Delta_x}(x)$. 
The map $P_e$ is an extension of $p$, all vertices of $T$ 
are contained in the image of $P_e$, and $P_e$ is
a $(1, 20c\delta+12\delta)$-quasi-isometry between $Q(X)$ and $T$.
\end{prop}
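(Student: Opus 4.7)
The statement is essentially a repackaging of the work already done in Proposition \ref{p: Q and T are q-i} and Lemma \ref{l: refinement of a q-i relation}, with the one genuinely new ingredient being the freedom to choose the edge $e$ on which $P_e$ is continuous. My plan is to invoke Lemma \ref{l: refinement of a q-i relation} with a careful choice of ordering of the edges of $Q(X)$, then check that each of the listed properties follows either from the construction in the lemma or from a property of the triangle maps $p_\Delta$ already established in Proposition \ref{p: triangles in Q}.

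First, I would enumerate the edges of $Q(X)$ as $e_1, e_2, \dots$ with $e_1 = e$, which is possible since $X$ is finite. This is exactly the setup built into the proof of Lemma \ref{l: refinement of a q-i relation}. Applying that lemma produces a map $P_e : Q(X) \to T$ of the form $P_e(x) = p_{\Delta_i}(x)$ whenever $x$ lies in $e_i$ but not in $e_1 \cup \dots \cup e_{i-1}$, where $\Delta_i$ is a triangle in $Q(X)$ chosen to contain $e_i$. Taking $\Delta_x = \Delta_i$ for each such $x$ gives the required triangle assignment and the pointwise formula $P_e(x) = p_{\Delta_x}(x)$.

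Second, I would verify the listed properties in turn. Continuity on $e$ is immediate: every point of $e = e_1$ is assigned its image by the single triangle map $p_{\Delta_1}$, which restricts to an isometry on each side of $\Delta_1$ by Definition \ref{d: comparison tripod} and the construction of $p_\Delta$. The fact that $P_e$ extends $p$ follows because each $p_\Delta$ was defined to send $x_i \in X$ to $p(x_i)$. The image of $P_e$ contains all vertices of $T$ because $P_e$ covers each geodesic segment $[p(x_i) p(x_j)]_T$ (the image of edge $e_{ij}$), and every vertex of the tree $T$ lies on some such leaf-to-leaf segment since $\partial T \subset p(X)$ by Definition \ref{d: GAT}. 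Finally, the $(1, 20c\delta + 12\delta)$-quasi-isometry constant is exactly the one recorded in Proposition \ref{p: Q and T are q-i}, obtained from Proposition \ref{p: relation P'} via $A + 2A' = (4c\delta + 4\delta) + 2(8c\delta + 4\delta) = 20c\delta + 12\delta$.

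The only real subtlety — and the one point I would be careful about — is making sure that requiring $e = e_1$ does not interfere with the quasi-isometry bound. It does not, since the bound in Lemma \ref{l: refinement of a q-i relation} depends only on the constants $A$ and $A'$ supplied by Proposition \ref{p: relation P'}, which are properties of the relation $P'$ and not of the ordering chosen when refining $P'$ to a map. Thus no new estimate is required beyond what is already in hand, and the proposition is proved by assembling the preceding results.
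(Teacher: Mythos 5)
Your proposal is correct and takes essentially the same route as the paper, which presents this proposition as a summary of what was already established: the map is exactly the one built in Lemma \ref{l: refinement of a q-i relation} with the edge ordering chosen so that $e=e_1$, and the constant $20c\delta+12\delta$ comes from Proposition \ref{p: relation P'} via $A+2A'$ as in Proposition \ref{p: Q and T are q-i}. (One minor slip that does not affect the argument: $p_{\Delta_1}$ restricts to a dilation, not an isometry, along the segments of a side of $\Delta_1$, but this still gives continuity on $e$.)
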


\section{The shadow of an approximating tree in $\H^n$}
\label{s: shadow}

The purpose of this section is to define a projection of $T$ into $\H^n$, called
the ``shadow'' $T_\sh$ of $T$.
The shadow is a collection of geodesic segments in $\H^n$, and contains $X$.  

To set up the definition of the shadow, we define 
a subset $\overline{X}\subset Q(X)$ as follows: let
$V(T)$ denote the vertices of $T$,
and let
$P:Q(X)\to T$ be a map satisfying the conditions of 
Proposition \ref{p: rephrased Q and T are q-i}.
Then, given $y\in V(T)$,
we assign to $y$ a point $x\in Q(X)$ so that $P(x)=y$, with
the requirement that if $y\in P(X)$, then the chosen $x$ is
an element of $X$. It is always possible to arrange the assignment 
so that no two points in $V(T)$ are assigned to the same $x$.  This follows
by construction of the map $P$ and the
definition of the Gromov approximating tree.
Denote the set of points chosen as $\overline{X}$.  The assignment
gives a bijection, which we denote as $q_V:V(T)\to \overline{X}$. 
We extend $q_V$ to a map $q$, which will allow
us to define the ``shadow'' of $T$.

\begin{prop}
\label{p: Tsh and T are q-i}
Let 
$q:T\to \H^n$ be the extension of $q_V:V(T)\to \overline{X}$ which is the unique map sending the edge $[y_1y_2]\subset T$
to $[q_V(y_1)q_V(y_2)]\subset \H^n$ via dilation: given 
$y\in [y_1y_2]$, we have
   $$d_\H(q_V(y_1),q_V(y))
      =\frac{d_\H(q_V(y_1),q_V(y_2))}{d_T(y_1,y_2)}\cdot d_T(y_1,y).$$
Along each edge in $T$, the map $q$ restricts to a $(1, 20c\delta+12\delta)$-quasi-isometry.
\end{prop}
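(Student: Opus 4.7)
The plan is to derive the edgewise quasi-isometry directly from Proposition \ref{p: rephrased Q and T are q-i}, which gives a $(1, A)$-quasi-isometry $P : Q(X) \to T$ for $A = 20c\delta + 12\delta$. The key observation is that $q_V$ is a set-theoretic section of $P$ over $V(T)$: by the construction of $\overline{X}$, we have $P(q_V(y)) = y$ for every $y \in V(T)$. Applying the quasi-isometry inequality to the preimages $q_V(y_1), q_V(y_2) \in Q(X)$ therefore yields the vertex comparison
$$\bigl| d_T(y_1, y_2) - d_\H(q_V(y_1), q_V(y_2)) \bigr| \leq A.$$

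The second step is to use this vertex comparison to control the restriction of $q$ to a single edge. Fix an edge $[y_1 y_2] \subset T$ and write $L_T = d_T(y_1, y_2)$ and $L_\H = d_\H(q_V(y_1), q_V(y_2))$, so that $|L_T - L_\H| \leq A$. For any $y, y' \in [y_1 y_2]$ the dilation definition of $q$ gives
$$d_\H(q(y), q(y')) = \frac{L_\H}{L_T}\, d_T(y, y'),$$
from which it follows that
$$\bigl| d_\H(q(y), q(y')) - d_T(y, y') \bigr| = \frac{|L_\H - L_T|}{L_T}\, d_T(y, y') \leq A,$$
using $d_T(y, y') \leq L_T$. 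This shows $q|_{[y_1 y_2]}$ is a $(1, A)$-quasi-isometric embedding onto the hyperbolic segment $[q_V(y_1) q_V(y_2)]_\H$; the inverse rescaling provides a quasi-inverse satisfying the same inequality, so the restriction is a genuine $(1, A)$-quasi-isometry.

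There is essentially no obstacle beyond assembling these pieces. The main subtlety worth flagging is that the argument never needs to control the multiplicative distortion $L_\H/L_T$ of the dilation (which could be large when $L_T$ is small), because the elementary inequality $d_T(y, y') \leq L_T$ absorbs the factor $1/L_T$ and leaves behind only the additive quantity $|L_\H - L_T|$, already bounded by $A$ via the vertex comparison.
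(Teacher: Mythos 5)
Your proposal is correct and follows essentially the same route as the paper: the paper's proof also rests on the observation that $P(q_V(y_i))=y_i$ and then applies Proposition \ref{p: rephrased Q and T are q-i} to the pair $q_V(y_1), q_V(y_2)$ to get the vertex-distance comparison. You simply make explicit the rescaling step (absorbing the factor $1/L_T$ via $d_T(y,y')\leq L_T$) and the quasi-inverse, which the paper leaves implicit.
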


\begin{proof}
If $y_1,y_2$ are vertices of $T$, then $P(q_V(y_i))=y_i$,
for $i=1,2$. Hence we may apply
Proposition \ref{p: rephrased Q and T are q-i} 
to the map $P: Q(X)\to T$ restricted to the points $y_1$ and $y_2$,
yielding the desired quasi-isometric inequality.
\end{proof}

\begin{rmk}
The map $q$ is in fact a $(1, |X|(20c\delta+12\delta))$-quasi-isometry between $T$ and $q(T)$.
\end{rmk}

\begin{defn}
We call a point $x=q(y)\in q(T)$ the {\em shadow} of $y\in T$.
\end{defn}

To define the ``shadow'' of the tree $T$, we first
observe that the image $q(T)$ contains the set $\overline{X}$.
Recall the map $p:X\to T$ (Definition \ref{d: GAT}).  When $p$ is not
injective, then $\overline{X}$ does not contain the original set $X$
generating the Gromov approximating tree.   
However, 
if $x\in X\setminus\overline{X}$, there is a unique element $z$ of $\overline{X}$ such that 
$p(x)=p(z)$.
 
For ease of exposition in later sections, we define the shadow of
$T$ to be a connected union of geodesic segments in $\H^n$ containing 
$q(T)$ and $X$.

\begin{defn}
\label{d: Tsh}
We define the {\em shadow} of $T$, denoted $T_\sh$, as the union of 
the image $q(T)$ and segments $[xz]\subset \H^n$ chosen as follows:
if $x\in X$ is not an element of $\overline{X}$, then the segment 
$[xz]$ is included in $T_\sh$, where $z$ is the unique element of $\overline{X}$
such that $p(x)=p(z)$.

If $x_1$ and $x_2$ are points in $T_\sh$, we define $d_\sh(x_1,x_2)$ 
to be the distance of a shortest path in $T_\sh$ from $x_1$ to $x_2$.
\end{defn}

The shadow $T_\sh$ is a collection of geodesic segments $[xy]\subset \H^n$
whose combinatorics mimic those of $T$.  

We let $[xy]_\sh$ denote the following union of segments 
in $T_\sh$: let $z_1, z_2$ be
the unique elements of $\overline{X}$ such that $p(x)=p(z_1)$ and $p(y)=p(z_2)$.
Then we define $[xy]_\sh$ as the concatenation of the segments $[xz_1]$,
$q([p(x)p(y)])$, and $[yz_2]$.

\section{Labelling Systems}
\label{s: labelling systems information}

In this section, we develop 
a purely combinatorial framework for working with 
special splittings, called a {\em labelling system}. 
Labelling systems are a collection
of labels for vertices of a tree; in Section \ref{s: application},
we will use them to produce special splittings from edges of 
an approximating tree.  Nontrivial splittings 
correspond to {\em useful} edges; trivial splittings correspond to
{\em useless edges}.  Whether an edge is useful or useless can
determined combinatorially.

The main result for this section is Proposition 
\ref{prop: useful subtree}, which says that the union of useful
edges is connected.  The crux of the proof of 
Proposition \ref{prop: useful subtree} is 
Proposition \ref{p: useless means full}, which is essentially
the Topological Helly Theorem, applied to the context
of labelling systems.
 
\subsection{Labelling systems}
\label{s: labelling systems}

Let $T$ be a finite simplicial tree.  Recall that a valence-one vertex is a {\it leaf} and the
set of leaves is $\partial T$.
Recall that $[ab]\subset T$ denotes the minimum length path between vertices $a$ and $b$ 
of $T$.

\begin{defn}
\label{d: labelling}
A {\it labelling} of $T$ is a relation 
  $\Lab: \vx(T)\to \{1, \dots, N\}.$
In particular, a vertex may have zero or more than one labels.
\end{defn}

\begin{defn}
\label{d: labelling system}
Let $\Lab(v)$ be the set of labels assigned to a vertex $v$.
We say that a relation
$$\Lab: \vx(T) \to \{1,\ldots, N\}$$ 
is {\em labelling system} if it
satisfies the following properties:

{\bf Property A (connectedness)}.  Let $a$ and $b$ be vertices of $T$, 
and let $x\in \vx(T)$ be a vertex
contained in the path $[ab]\subset T.$   Then 
$\Lab(a)\cap\Lab(b)\subset \Lab(x).$

{\bf Property B (surjectivity)}.  The {\em full set} of indices 
 $\{1,\ldots, N\}$  is contained in $$\bigcup_{x} \Lab(x),$$
 where the union is taken over all vertices of $T$.

\end{defn}

The labelling system used in Section
\ref{s: application} is constructed from an
existing labelling as follows.

\begin{defn}
\label{d: old-new labelling system} 
Let $(T,\Lab)$ be a labelled tree, and let $V$ denote 
the vertices of $T$.    We define a labelling 
     $\newLab: V=\vx(T)\to \{1,\dots, N\}$
as follows.  Suppose $x$ is a vertex of $T$.   Let $Z(x)$ be the
set of minimum-length paths in $T$ passing through $x$, so
     $Z(x)=\{ [ab] \mid x\in [ab]\}.$
We set
     $$\newLab(x)=\bigcup_{[ab]\in Z(x)} (\Lab(a)\cap \Lab(b)),$$
so if $x$ lies in the path $[ab]$ and $a$ and $b$ have a common
label $i$, then $i\in\Lab(x)$.
We call $\newLab$ the {\em canonical extension} of $\Lab$.
\end{defn}
It follows that $\Lab(x)\subset \newLab(x)$ for all vertices $x$ in $T$.

Using standard techniques for working with paths in trees, one may
verify the following two lemmas.

\begin{lemma}
\label{l: new implies old}
Let $T$ be a tree, $\Lab$ be a labelling of $T$, and $\newLab$ be the canonical
extension of $T$.
Suppose $v$ is a vertex in $[ab]$ and $i\in \newLab(a)\cap\newLab(b)$. 
Then then there exist $a',b'$ such that $v\in[a'b']$ and $i\in\Lab(a')\cap\Lab(b')$.
\end{lemma}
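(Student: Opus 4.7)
The plan is to unpack the definition of the canonical extension $\newLab$ and use elementary tree combinatorics. From $i \in \newLab(a)$ we obtain witnesses $a_1, a_2$ with $a \in [a_1 a_2]$ and $i \in \Lab(a_1) \cap \Lab(a_2)$; similarly $b_1, b_2$ witness $i \in \newLab(b)$. The target is to pick $a' \in \{a_1, a_2\}$ and $b' \in \{b_1, b_2\}$ so that $v \in [a' b']$; since both $a'$ and $b'$ are then labelled $i$ under $\Lab$, this suffices.

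The key structural fact I would use is that in a simplicial tree $T$, removing a vertex $x$ decomposes $T \setminus \{x\}$ into connected components, one per incident edge, and two other vertices $y, z$ satisfy $x \in [yz]$ if and only if they lie in distinct components. First I would produce $a'$. Assume the generic case $a \notin \{a_1, a_2\}$ (the degenerate case is dealt with by setting $a' = a$, which carries the label $i$ directly). Since $a \in [a_1 a_2]$, the vertices $a_1$ and $a_2$ lie in two distinct components of $T \setminus \{a\}$; whichever component contains $b$, at least one of $a_1, a_2$ lies in a \emph{different} component, call it $a'$. Then $[a', b]$ passes through $a$, so concatenating with the subpath $[a, v] \subset [a, b]$ gives $v \in [a', b]$.

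Next I would run the symmetric argument at $b$. Again in the generic case $b \notin \{b_1, b_2\}$, the pair $b_1, b_2$ lies in distinct components of $T \setminus \{b\}$, so one of them, called $b'$, lies in a component different from the one containing $a'$. Hence $[a', b']$ passes through $b$, and combined with $v \in [a', b]$ we get $v \in [a', b']$. Since $a' \in \{a_1, a_2\}$ and $b' \in \{b_1, b_2\}$, both carry the label $i$ under $\Lab$, which is exactly the conclusion.

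The main obstacle is purely notational: the degenerate cases where $a$ equals one of its witnesses $a_i$, or $b$ equals one of its witnesses $b_j$, or where $v$ coincides with $a$ or $b$. Each of these is immediate (take $a' = a$ or $b' = b$ and use that the label $i$ is then already in $\Lab$ of that vertex), but they need to be listed explicitly to make the case analysis complete.
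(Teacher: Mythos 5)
Your proof is correct, and since the paper omits any argument for this lemma (it only asserts that it follows by ``standard techniques for working with paths in trees''), your unpacking of the definition of $\newLab$ followed by the component argument at $a$ and then at $b$ is exactly the standard argument being alluded to. The degenerate cases you flag ($a\in\{a_1,a_2\}$, $b\in\{b_1,b_2\}$, $v\in\{a,b\}$, or $a=b$) are indeed immediate with $a'=a$ or $b'=b$ as you indicate, so nothing essential is missing.
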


\begin{lemma}
\label{lemma: combinatorial label}
Let $\Lab$ be a labelling on $T$, and let $\newLab$ is the canonical extension
of $\Lab$.  Then $\newLab$ is connected (Property A from Definition
\ref{d: labelling system}.
\end{lemma}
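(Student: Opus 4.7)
The plan is to derive Lemma \ref{lemma: combinatorial label} as an essentially one-line consequence of Lemma \ref{l: new implies old}. Fix vertices $a, b$ of $T$, a vertex $x$ lying on the path $[ab]$, and a label $i \in \newLab(a) \cap \newLab(b)$; I must show $i \in \newLab(x)$. Applying Lemma \ref{l: new implies old} with $v = x$ produces vertices $a', b'$ such that $x \in [a' b']$ and $i \in \Lab(a') \cap \Lab(b')$. By the very definition of the canonical extension in Definition \ref{d: old-new labelling system}, the pair $[a'b']$ lies in $Z(x)$ and contributes $\Lab(a') \cap \Lab(b')$ to the union defining $\newLab(x)$; hence $i \in \newLab(x)$.

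The only substantive content therefore lies in Lemma \ref{l: new implies old}, which I would prove by a standard tree-convexity argument. Given $i \in \newLab(a) \cap \newLab(b)$, unpacking the definition of $\newLab$ yields witnesses $a_1, a_2$ with $a \in [a_1 a_2]$ and $i \in \Lab(a_1) \cap \Lab(a_2)$, and similarly $b_1, b_2$ with $b \in [b_1 b_2]$ and $i \in \Lab(b_1) \cap \Lab(b_2)$. The goal is to locate $a', b' \in \{a_1, a_2, b_1, b_2\}$ so that $x \in [a'b']$, after which one reads off $i \in \Lab(a') \cap \Lab(b')$ automatically since all four witnesses carry label $i$.

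The key tree-theoretic observation is that the subtree $T_0 \subset T$ spanned by $\{a_1, a_2, b_1, b_2\}$ contains $[a_1 a_2]$ and hence $a$, and similarly contains $b$; therefore $T_0$ contains $[ab]$, and in particular $x \in T_0$. In any finite subtree spanned by a vertex set $V'$, every point lies on some geodesic $[cd]$ with $c, d \in V'$: removing any internal point of such a subtree disconnects its leaves, and those leaves form a subset of $V'$. Applying this observation with $V' = \{a_1, a_2, b_1, b_2\}$ produces the required $a'$ and $b'$.

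The only possible obstacle is the tree-convexity fact in the previous paragraph, but this is a routine ``standard technique for working with paths in trees'' as the paper indicates; with it in hand, the deduction of Property A for $\newLab$ is immediate.
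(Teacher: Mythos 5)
Your proof is correct; since the paper gives no written argument here (it declares both Lemma \ref{l: new implies old} and Lemma \ref{lemma: combinatorial label} verifiable by ``standard techniques for working with paths in trees''), your write-up simply supplies the omitted details along the route the paper clearly intends, namely deducing Property A for $\newLab$ immediately from Lemma \ref{l: new implies old} together with the definition of the canonical extension in Definition \ref{d: old-new labelling system}. Your proof of Lemma \ref{l: new implies old} itself --- pass to the subtree spanned by the four label-carrying witnesses, observe it contains $[ab]$ and hence $x$ by convexity of subtrees, and use that every vertex of a spanned subtree lies on a geodesic between two of its leaves, which necessarily belong to the spanning set --- is the standard tree argument and is sound.
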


It is an immediate consequence of Lemma \ref{lemma: combinatorial label} that:
\begin{lemma}
\label{l: new is surjective}
If $\Lab$ is surjective (Property B from Definition \ref{d: labelling system}),
then $\newLab$ is a labelling system.
\end{lemma}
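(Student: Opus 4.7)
The plan is to verify the two defining properties of a labelling system for $\newLab$, both of which will follow essentially from results already established in the excerpt. Property A (connectedness) is already handled by Lemma \ref{lemma: combinatorial label}, which states that the canonical extension of any labelling is connected. So no additional work is required there; I would simply invoke that lemma.

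For Property B (surjectivity), the key observation is the remark immediately following Definition \ref{d: old-new labelling system}: for every vertex $x \in \vx(T)$, we have $\Lab(x) \subset \newLab(x)$. This containment holds because one can take $a = b = x$ in the defining union, giving $\Lab(x) = \Lab(x) \cap \Lab(x) \subset \newLab(x)$. Consequently,
\[
\bigcup_{x \in \vx(T)} \Lab(x) \;\subset\; \bigcup_{x \in \vx(T)} \newLab(x).
\]
By hypothesis $\Lab$ satisfies Property B, so the left-hand side contains $\{1,\ldots,N\}$; therefore so does the right-hand side, which is exactly Property B for $\newLab$.

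Combining these two observations completes the proof that $\newLab$ is a labelling system. I do not anticipate any real obstacle here: the surjectivity step is a one-line set-theoretic inclusion, and the connectedness step is a direct citation of the preceding lemma. The substantive content of the construction is carried by Lemma \ref{lemma: combinatorial label}, whose verification uses standard tree arguments (paths in trees are unique, so if $v \in [ab]$ and $v \in [cd]$, then the common labels propagate through the branch structure at $v$). The present lemma is a clean corollary.
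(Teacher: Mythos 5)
Your proof is correct and follows essentially the same route as the paper, which treats the lemma as an immediate consequence of Lemma \ref{lemma: combinatorial label} together with the already-noted containment $\Lab(x)\subset\newLab(x)$. You merely spell out the one-line surjectivity argument that the paper leaves implicit.
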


We use the Lemmas \ref{lemma: combinatorial label}-\ref{l: new is surjective}
in Section \ref{s: geometry and combinatorics}, when we relate the geometry
of approximating trees to the combinatorics of labellings.

\subsection{Useless and useful edges}
\label{s: useful subtree}
Suppose that $(T,\Lab)$ is a finite simplicial labelled tree, and that $\Lab$
is a labelling system (Definition \ref{d: labelling system}).

The removal of any open edge $e\subset T$ separates the tree
into two closed connected components.  For the sake of bookkeeping,
let us orient the edge.  We call $T^+(e)$ the component toward which
$e$ is oriented and we call the remaining component 
$T^-(e)$, as illustrated in Figure \ref{fig: plus and minus subtrees}.

\begin{figure}
\includegraphics{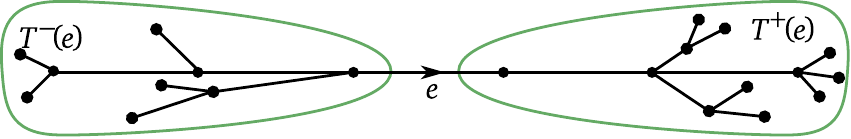}
\caption{\label{fig: plus and minus subtrees} The subtrees $T^+(e)$ and $T^-(e)$.}
\end{figure}

\begin{defn}
\label{d: useful edge}
We say that an edge is {\em useless} if
   $$\bigcup_{v\in T^+} \Lab(v) \quad\; \textrm{ or } \quad \bigcup_{v\in T^-} \Lab(v)$$  
contains the full index set. An edge is {\em useful}
if it is not useless.  
\end{defn}

\begin{prop}
\label{p: useless means full}
Every edge of $T$ is useless if and only if there exists
a ``full vertex'', i.e., a vertex $z$ such that $\Lab(z)=\{1,\dots,N\}.$
\end{prop}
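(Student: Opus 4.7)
The plan is to prove both directions, with the reverse direction being the substantive one. For $(\Leftarrow)$, if $z$ is a full vertex then for any edge $e$ the vertex $z$ lies in exactly one of $T^+(e)$ or $T^-(e)$; that side's union of labels already contains $\Lab(z)=\{1,\dots,N\}$, so $e$ is useless.

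For $(\Rightarrow)$, I would reformulate the problem in terms of the ``label regions'' $V_i=\{v\in\vx(T) : i\in\Lab(v)\}$ for $i=1,\dots,N$. Property B (surjectivity) gives $V_i\neq\emptyset$ for every $i$, and Property A (connectedness) says that whenever $a,b\in V_i$, the entire path $[ab]$ lies in $V_i$; thus each $V_i$ induces a subtree of $T$. Producing a full vertex is then exactly the same as producing a vertex of $\bigcap_{i=1}^{N} V_i$.

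The key step is to deduce pairwise intersection of the $V_i$ from the uselessness of every edge. Suppose for contradiction that $V_i\cap V_j=\emptyset$ for some $i\neq j$. Since $V_i$ and $V_j$ are disjoint subtrees of $T$, some edge $e$ on the unique $T$-path joining them separates them, say with $V_i\subset T^-(e)$ and $V_j\subset T^+(e)$. Then $T^-(e)$ contains no vertex labelled $j$ and $T^+(e)$ contains no vertex labelled $i$, so neither side carries the full index set and $e$ is useful, contradicting the hypothesis.

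Once pairwise intersection holds, I would invoke the Helly property for subtrees of a tree: a finite family of pairwise intersecting subtrees of a tree has nonempty common intersection. (The standard proof roots $T$ at some vertex, takes $v_i$ to be the root-nearest vertex of $V_i$, picks $v_{i^*}$ of maximum depth, and uses the fact that any path in $T$ from the root to a point of $V_{i^*}\cap V_j$ must traverse both $v_{i^*}$ and $v_j$, forcing $v_{i^*}\in V_j$ by maximality of depth.) The resulting common vertex $z\in\bigcap_i V_i$ is full. The main obstacle is packaging the Helly-for-trees step, but this is a well-known short argument; everything else is direct bookkeeping against Properties A and B.
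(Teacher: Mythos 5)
Your proposal is correct and follows essentially the same route as the paper: the easy direction is identical, and for the substantive direction both arguments pass to the label subtrees (your $V_i$, the paper's $T_i$), use surjectivity for nonemptiness, derive pairwise intersection from the uselessness of every edge by the same separating-edge contradiction, and finish with the Helly property for subtrees of a tree. The only difference is that the paper cites the Topological Helly Theorem (Debrunner) for the final step, while you sketch the standard elementary root-and-depth proof of Helly for trees, which is a cosmetic rather than structural variation.
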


\begin{proof}
Suppose that $z\in \vx(T)$ is full.  Let $e$ be an edge of $T$.
Then $z$ is contained in either $T^+(e)$ or $T^-(e)$, so $e$ is
useless. Hence all edges are useless.

The other direction follows from the Topological Helly Theorem in
\cite[Lemma $A_m$]{Deb}. When working with a finite collection $\{T_i\}$ of contractible sets 
in a contractible space $T$,
the Topological Helly Theorem states that if the space $T$ has
covering dimension 1 and the pairwise intersection
$T_i\cap T_j$ is nonempty and connected for all $i\neq j$, 
then the intersection $\bigcap T_i$ is nonempty.

In our case, let $T_i$ be the subtree of the tree $T$ spanning all vertices
labelled by $i$.   By surjectivity of $\Lab$ (Definition \ref{d: labelling system},
Property B), 
each $T_i$ is nonempty.   
By construction, each $T_i$  is contractible. 

To show that $T_i$ and $T_j$ intersect, suppose
by contradiction that they do not.  
Then there exists an edge $e$ that separates $T_i$ from $T_j$, i.e., an edge $e$
such that $T_i\subset T^+(e)$ and $T_j\subset T^-(e)$.  This contradicts
the assumption that all edges are useless.

We have shown that $\bigcap T_i$ is nonempty.  Because each $T_i$ is a finite
simplicial tree, and there are only a finite number of $T_i$, their
intersection contains at least one vertex.  Hence there exists 
a vertex labelled by all $i$ in $\{1,\ldots, N\}$.
\end{proof}

\begin{prop}
\label{prop: useful subtree}
The union of useful edges of $T$ forms a subtree.
\end{prop}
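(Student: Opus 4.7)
A connected subgraph of a tree is automatically a subtree, so it suffices to show that the set of useful edges is connected: for any two useful edges $e_1, e_2$, every edge on the unique path $[e_1, e_2] \subset T$ should again be useful. I would argue this by contradiction, assuming some edge $e$ on this path is useless and deriving a contradiction with the usefulness of $e_2$ (or symmetrically of $e_1$).

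Setting up the contradiction: removing $e$ splits $T$ into the two subtrees $T^+(e)$ and $T^-(e)$, one containing $e_1$ and the other containing $e_2$. By the definition of useless, some side carries the full label set. First I must dispose of the degenerate possibility that \emph{both} sides are full. By Property A of Definition \ref{d: labelling system}, the set of vertices carrying any fixed label $i$ spans a connected subtree; so if both $T^{+}(e)$ and $T^{-}(e)$ contain label $i$, then this connected subtree must cross $e$, forcing every label to appear at both endpoints of $e$. Then both endpoints of $e$ are full vertices, and Proposition \ref{p: useless means full} gives that every edge of $T$ is useless, contradicting usefulness of $e_1$. Hence exactly one side, call it $T^+(e)$, carries all labels; up to swapping the roles of $e_1$ and $e_2$, I may assume $e_1 \subset T^+(e)$ and $e_2 \subset T^-(e)$.

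Now I exploit the useful edge $e_2$: removing $e_2$ from $T$ produces two components, and the one containing $e$ also contains all of $T^+(e)$, since any path in $T$ from $T^+(e)$ to $e_2$ must first cross $e$. This component therefore carries every label, contradicting the usefulness of $e_2$. I expect the only real subtlety to be the case analysis when translating ``useless'' into ``some side is full''; ruling out the ``both sides full'' scenario is the one place that genuinely needs Proposition \ref{p: useless means full}, while the rest is a direct label-propagation observation exploiting that once a full subtree is sealed off on one side of an edge, no further removal can unseal it.
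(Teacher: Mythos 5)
Your argument is correct and takes essentially the same route as the paper: assume an edge $e$ on the path between two useful edges is useless, and derive a contradiction from the fact that the full side of $e$ is contained in one of the two components determined by $e_1$ or by $e_2$. The only difference is your detour ruling out the ``both sides full'' case via Proposition \ref{p: useless means full}, which is unnecessary -- your final containment argument already applies as soon as at least one side of $e$ carries the full label set, whichever of $e_1$, $e_2$ it contains.
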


\begin{figure}
\includegraphics{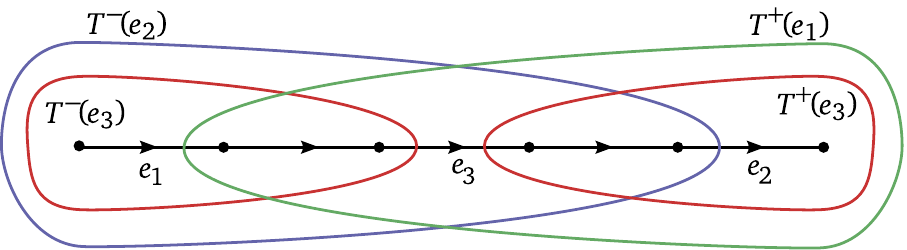}
\caption{\label{fig: useful edges} Useless edges cannot separate useful edges.}
\end{figure}

\begin{proof}
Let $e_1$ and $e_2$ be useful edges.   Let $e_3$ be an
edge contained in the unique geodesic path between $e_1$ and $e_2$ 
and orient the path so it flows from $e_1$ to $e_2$
(see Figure \ref{fig: useful edges}). 
We show that $e_3$ is useful.

Since $e_1$ and $e_2$ are useful edges, neither 
$T^+(e_1)$ nor $T^-(e_2)$ contain all labels.  By way of contradiction, 
suppose that $e_3$ is a useless.  Then the vertices of
either $T^+(e_3)$ or $T^-(e_3)$ contain all the labels.  However, 
because $e_3$ lies between $e_1$ and $e_2$, we have
$T^+(e_3)\subset T^+(e_1)$ and $T^-(e_3)\subset T^-(e_2)$.
This means that either 
  $$\bigcup_{v\in T^+(e_3)} \Lab(v)=S \subset 
     \bigcup_{v\in T^+(e_1)} \subsetneq S$$
     or
   $$\bigcup_{v\in T^-(e_3)} \Lab(v)=S \subset 
     \bigcup_{v\in T^-(e_2)} \subsetneq S,
  $$
giving a contradiction.
\end{proof}

Recall that our ultimate aim is to associate edges in a Gromov
approximating tree to splittings. 
For the proof of the main result, we are interested in nontrivial splittings.  
As we show in Section \ref{s: special splittings from Thyp}, an edge 
produces a nontrivial splitting when it is {\it useful} in the sense
of Definition \ref{d: useful edge}.
For this reason, we let $\Tspl$ denote the subtree formed by useful edges.

\begin{prop}
\label{p: Labspl}
Let $\Labspl$ be the restriction of $\Lab$ to $\Tspl$. For nonempty $\Tspl$, the
relation $\Labspl$
is surjective (Definition \ref{d: labelling system}, Property B).
\end{prop}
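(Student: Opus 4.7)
The plan is to argue by contradiction. Suppose some label $i \in \{1,\ldots,N\}$ is carried by no vertex of $\Tspl$, and let $T_i \subset T$ denote the subtree spanned by the vertices $v$ with $i \in \Lab(v)$. By Property B, $T_i$ is nonempty; and by Property A, every vertex on the path between two label-$i$ vertices itself carries label $i$, so the vertex set of $T_i$ is exactly $\{v : i \in \Lab(v)\}$. Thus $T_i$ and $\Tspl$ are disjoint nonempty subtrees of $T$. Pick a vertex $u \in \Tspl$ closest to $T_i$, and let $e^* \subset T$ be the edge at $u$ on the geodesic from $u$ to $T_i$.

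The choice of $u$ forces the far endpoint of $e^*$ to lie outside $\Tspl$; since $\Tspl$ is a subtree by Proposition \ref{prop: useful subtree}, this means $e^* \notin \Tspl$, so $e^*$ is useless. Orient $e^*$ so $T^-(e^*)$ contains $u$ (and hence all of $\Tspl$) while $T^+(e^*)$ contains $T_i$. Since $e^*$ is useless, one of the two sides carries the full label set. If $T^-(e^*)$ does, some vertex $w \in T^-(e^*)$ has label $i$, forcing $w \in T_i \subset T^+(e^*)$, a contradiction.

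So $T^+(e^*)$ carries all labels. Here I use that $u \in \Tspl$: some useful edge $e'$ is incident to $u$, and $e' \neq e^*$ because $e^*$ is useless. Orient $e'$ so $T^-(e')$ contains $u$. Since $e' \neq e^*$, removing $e'$ from $T$ leaves $e^*$ intact, so the component of $T \setminus e'$ containing $u$ also contains the far endpoint of $e^*$ and hence all of $T^+(e^*)$; that is, $T^+(e^*) \subset T^-(e')$. Therefore $T^-(e')$ also carries all labels, so $e'$ is useless, contradicting its choice.

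The main obstacle is the case analysis on which side of the useless edge $e^*$ carries all labels; the second case is the delicate one, as it is where the hypothesis $u \in \Tspl$ (equivalently, the nonemptiness of $\Tspl$) is actually used, via the existence of a useful edge $e'$ at $u$ whose $u$-side can absorb the full-label side of $e^*$ to contradict the usefulness of $e'$.
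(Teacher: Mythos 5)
Your proof is correct, but it takes a genuinely different route from the paper's. The paper collapses the useful subtree to a point, checks that the induced labelling on the quotient tree $T'=T/\Tspl$ is again a labelling system all of whose edges are useless, invokes Proposition \ref{p: useless means full} (the Topological Helly argument) to produce a full vertex of $T'$, and then rules out the possibility that this full vertex lifts to a vertex of $T\setminus\Tspl$. You instead argue directly: assuming some label $i$ misses $\Tspl$, you take the label-$i$ subtree $T_i$, choose a closest vertex $u\in\Tspl$ and the first edge $e^*$ toward $T_i$, observe $e^*$ is useless, and derive a contradiction in both cases of the uselessness dichotomy --- in the first case from $T_i\subset T^+(e^*)$ (which rests on Property A and the disjointness of $T_i$ from $\Tspl$), and in the second from the existence of a useful edge $e'$ at $u$ whose $u$-side absorbs $T^+(e^*)$ and hence all labels, contradicting usefulness of $e'$. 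Your argument is more elementary and self-contained: it bypasses the quotient construction and Proposition \ref{p: useless means full} entirely, needing only Properties A and B, the definition of useless edges, and the fact that $\Tspl$ is a connected union of closed useful edges (Proposition \ref{prop: useful subtree}); it also makes explicit exactly where nonemptiness of $\Tspl$ is used. The paper's proof, in exchange, is shorter on the page because it reuses the Helly-type machinery already established, and the quotient-tree viewpoint generalizes cleanly to collapsing any connected subtree. Both are valid; your second case does silently use that in a tree the only edge joining $T^+(e^*)$ to $T^-(e^*)$ is $e^*$ itself (so $e'\subset T^-(e^*)$ and $T^+(e^*)\subset T^-(e')$), which is worth stating, but it is immediate from acyclicity.
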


\begin{proof}  
We assume
that $\Tspl$ is nonempty.  To show surjectivity of $\Lab_\spl$, we 
construct a tree $T'$ 
by collapsing the useful subtree to a point: $T' = T/\Tspl.$
Let $\rho_\spl:T\to T'$ be the quotient map that induces the identification.

The image $v_\spl=\rho_\spl(\Tspl)$ is a vertex of $T'$.
If $w$ is a vertex of $T'$ other than $v_\spl$, it lifts to a unique vertex in $T\setminus\Tspl$.
Hence we define  $\Lab': \vx(T')\to \{1,\dots,N\}$
to send $v_\spl$ to $\bigcup_{v\in\Tspl} \Lab(v)$
and other vertices $w$ to $\Lab(\rho_\spl^{-1}(w))$.

Since $\Lab$ is a labelling system, so is $\Lab'$.   Furthermore,
every edge of $T'$ is useless by construction.
By Proposition \ref{p: useless means full}, the tree $T'$ has
a full vertex $x$.  

We claim that $x=v_\spl$.   By contradiction, suppose that is it not.  
Then $$\Lab(\rho_\spl^{-1}(x))=\{1,\dots,N\},$$ so the vertex 
$\rho_\spl^{-1}(x)$ of $T$ is full. By Proposition \ref{p: useless means full}, the
existence of a full vertex implies that 
all edges of $T$ are useless.  This is a contradiction, as $\Tspl$ 
is nonempty.  Hence $x=v_\spl$, and $\bigcup_{v\in\Tspl} \Lab(v)=\{1,\dots,N\}$.
We conclude $\Labspl$ is a surjective relation.
\end{proof}

\section{Bounds on the displacement function}
\label{s: bounds on B}

\subsection{The space of discrete and faithful representations and the displacement
function}

An isometric action $G\curvearrowright \H^n$ is equivalent to a representation 
$\rho: G\to \Isom(\H^n)$; the {\it representation
variety} of $G$-actions on $\H^n$ is defined as 
   $$R(G,n)=\Hom(G,\Isom(\H^n))=\{ \rho: G\to \Isom(\H^n) \}.$$
   
We define the adjoint action $ad\curvearrowright \Isom(\H^n)$ of $\Isom(\H^n)$
on itself via conjugation: $ad(h)\cdot h'=h^{-1}h'h$.  The adjoint action induces
an action $\Isom(\H^n)$ on $R(G,n)$; for $\rho\in R(G,n)$ and $h\in \Isom(\H^n)$,
the representation $h\cdot \rho$ sends $g$ to $h^{-1}\rho(g)h$.    The
space of conjugacy classes of representations in $R(G,n)$ is homeomorphic
to the quotient 
    $$\overline{R}(G,n)=R(G,n)/\Isom(\H^n),$$ 
where $\Isom(\H^n)$ acts on $R(G,n)$
by the action induced by $ad$.

Unfortunately, the above space is in general non-Hausdorff
\cite[Section 4.3, p. 57]{Kbook}.  So one instead considers
the Mumford quotient
  $$X(G,n)=\Hom(G,\Isom(\H^n))/\!/\Isom(\H^n),$$
which is an algebraic 
variety.  The space $X(G,n)$ is called
the {\it character variety}.  For more information
on this space, see \cite{Mo86}.  The series of work by Culler, Morgan,
and Shalen \cite{CS83}\cite{MS84}\cite{MS88a}\cite{MS88b}\cite{Mo86} examine the character
variety.  

We are interested in conjugacy classes of discrete
and faithful actions on $\H^n$.   
Let 
   $$\Hom_\df(G,n)\subset \Hom(G,\Isom(\H^n))$$
denote the space of discrete and faithful representations.
When $G$ does not contain any infinite nilpotent normal
subgroups (e.g., it is not small), then 
  $$\Hom_\df(G,n)/\Isom(\H^n)$$
is Hausdorff, and in particular, it is a subspace
of both $\overline{R}(G,n)$ and the character variety  $X(G,n)$
(see \cite[Chapter 8, p. 157]{Kbook}). 

\begin{defn}
Given a group $G$ and a dimension $n$, we define 
   $$\D(G,n)=\Hom_\df(G,n)/\Isom(\H^n),$$
and call this set the {\em deformation space} of $G$ 
into $\Isom(\H^n)$.
\end{defn}

\begin{defn}
Let $G$ be a finitely-presented group generated by $S$.  
Let $\rho:G\to \Isom(\H^n)$ be a representation, and let
$B_\rho(x)=\max_{s\in S} d(x,s(x)).$
The {\em displacement function} of a representation is defined as 
$B_\rho=\inf_{x\in \H^n} B_\rho(x).$  
We denote the supremum of displacement functions of representations
in a deformation space as $$B=\sup_{\rho\in \D(G,n)} B_\rho.$$
Given $[\rho], [\rho'] \in \D(G,n)$, we have $B_\rho = B_{\rho'}$  when $[\rho]=[\rho']$.
\end{defn}

In \cite{Be88}, Bestvina  observed that the Compactness Theorem is equivalent
to a uniform upper bound on the displacement function.  
As discussed in the introduction, the methods
used to prove the Compactness Theorem do not give estimates for
such a bound in general.  

\subsection{Application of combinatorial framework to special splittings}
\label{s: application}

Let $W$ be a Coxeter group with system $(W,S)$, and 
$\rho: W\curvearrowright \H^n$ 
be a discrete, faithful, and isometric action.  We associate
this data to an approximating tree $T$.
Below, we define the subset $X$ of $\H^n$ from which $T$ is constructed.

We assume that the Coxeter diagram $\Gamma(W,S)$ is connected; 
a disconnected Coxeter diagram corresponds to a splitting over
the trivial group, which is small.

Let $S=\{s_1,\dots,s_k\}$.
Consider the set $\mathcal S$ of pairs $\{s_i,s_j\}$ which generate finite
dihedral groups.  Being finite, these dihedral
groups have nonempty fixed-point sets in $\H^n$ (see \cite[Chapter II.6, Proposition 6.7]{BH}).  

Fix a representation $\rho:W\to \Isom(\H^n)$, and suppose it
is discrete and faithful.  Let $X$ be a set of representative
points from the fixed-point sets of pairs in $\mathcal S$;
hence $|X|\leq |\mathcal S|\leq \binom{k}{2}$.

\begin{rmk}
The space $X$ can have arbitrarily large diameter, even in the case $k=3$.
\end{rmk}

Let $(T,p)$ be a Gromov approximating tree for the set $X$, and 
recall the map $q:T_\sh \to T$.
Let $\stab_S(x)$ denote the set of elements in $S$ that fix $x$.

\begin{defn}
\label{d: newLab on T}
Let the labelling 
$\Lab:V(T)\to S$ 
send a vertex $v$ to the union of labels $\bigcup_{x\in p^{-1}(v)}\stab_S(x)$ when $q(v)\in X$
and to the empty set otherwise.
We define a {\em generator labelling}
denoted $\newLab:V(T)\to S$ as the canonical extension of $\Lab$.
\end{defn}

\begin{thm}
\label{t: Thyp labelling system}
The labelling $\newLab: V(T)\to S$ is a labelling system.
\end{thm}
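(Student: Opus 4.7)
The plan is to reduce the statement to a surjectivity check via the machinery already built up in Section \ref{s: labelling systems information}. By Lemma \ref{lemma: combinatorial label}, the canonical extension $\newLab$ of any labelling on $T$ satisfies Property A (connectedness) automatically. By Lemma \ref{l: new is surjective}, to upgrade $\newLab$ from a labelling that satisfies Property A to one that is a labelling system, it suffices to check that the underlying $\Lab$ is surjective onto $S$, i.e., that every generator $s \in S$ appears in $\Lab(v)$ for at least one vertex $v$ of $T$. So the proof reduces to showing Property B for $\Lab$.

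To verify Property B, the key geometric input is the standing assumption from Section \ref{s: coxeter} that the Coxeter diagram $\Gamma(W,S)$ is connected. First I fix an arbitrary generator $s_i \in S$ and show it belongs to some pair $\{s_i, s_j\} \in \mathcal{S}$, where $\mathcal{S}$ is the collection of pairs generating finite dihedral subgroups. In the rank-one case $W$ admits no interesting discrete faithful action on $\H^n$, so I assume $|S| \geq 2$; then connectedness of $\Gamma(W,S)$ guarantees that the vertex labelled $s_i$ has at least one neighbour $s_j$. By the definition of the Coxeter diagram, the existence of an edge between $s_i$ and $s_j$ is exactly the statement that $m_{ij} < \infty$, equivalently that $\langle s_i, s_j \rangle$ is a finite dihedral group, equivalently that $\{s_i,s_j\}\in\mathcal{S}$.

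With such a pair in hand, the construction of $X$ supplies a representative $x \in X$ in the fixed-point set of $\langle s_i, s_j\rangle$, so $s_i \in \stab_S(x)$. Setting $v = p(x) \in V(T)$, we have $x \in p^{-1}(v)$, and because $v \in p(X)$ the requirement in the construction of $\overline{X}$ forces $q(v) \in X$. Consequently the nonempty clause of Definition \ref{d: newLab on T} applies, giving $s_i \in \stab_S(x) \subset \Lab(v)$. Running this for each $s_i$ shows that $\Lab$ is surjective onto $S$, hence so is $\newLab \supset \Lab$, and Lemma \ref{l: new is surjective} finishes the proof.

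The main (and essentially only) obstacle is the translation from the geometric hypothesis of connectedness of $\Gamma(W,S)$ to the combinatorial surjectivity of $\Lab$; once that bridge is built via the correspondence between edges of the Coxeter diagram and finite dihedral pairs, everything else is a direct appeal to the lemmas of Section \ref{s: labelling systems information} and the definitions from Section \ref{s: shadow}.
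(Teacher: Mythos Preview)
Your proof is correct and follows essentially the same route as the paper: Property~A via Lemma~\ref{lemma: combinatorial label}, then surjectivity of $\Lab$ from connectedness of $\Gamma(W,S)$ by producing, for each $s_i$, a finite dihedral pair $\{s_i,s_j\}\in\mathcal{S}$ and the corresponding fixed point $x\in X$. If anything, you are slightly more careful than the paper in explicitly verifying the clause $q(v)\in X$ of Definition~\ref{d: newLab on T} before concluding $s_i\in\Lab(v)$; the paper leaves that check implicit.
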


\begin{proof}
We show that the map $\newLab$ is connected and surjective
(Definition \ref{d: labelling system}, Property A and Property B).  
The properties essentially
hold by construction.  

Property A follows from 
Lemma \ref{lemma: combinatorial label}.

To show Property B, note that each $\rho(s_i)$ has a nonempty fixed-point set because it is
an involution. The diagram for $(W,S)$ is connected, so there exists
an $s_{j\neq i}$  such that $\{s_i,s_j\}$ generate
a finite dihedral group, which has nonempty fixed point set.
For each $s_i$, there is a point $x_i\in X$ fixed by $s_i$.
Thus there is a point in $V(T)$ labelled by $s_i$, namely, the point $p(x_i)$. 
Since $p(X)\subset V(T)$,
it follows that the union $\bigcup_{v\in V(T)} \newLab(v)$
contains the full set $S=\{s_1,\dots,s_g\}$, so $\newLab$ is surjective.
\end{proof}

\subsection{Correspondence between the combinatorics of labelling systems 
and the geometry of actions}
\label{s: geometry and combinatorics}

Let $\Lab: V(T)\to S$ and $\newLab: V(T)\to S$ be the maps defined in Definition 
 \ref{d: newLab on T}. 
   
The combinatorics of the generator labelling system $\newLab$ correspond
to the geometry of the action: if $s_i$ labels a vertex 
$v\in T$, then we can bound the amount by which 
$\rho(s_i)$ displaces its shadow $q(v)$.  To make
this statement precise, we introduce the notion of an $R$-fixed point.

\begin{defn}
\label{d: R-fixed}
Let $R\geq 0$.
Fix an action $W\curvearrowright \H^n$.
We say a point $x$ is {\it $R$-fixed} 
by elements $w_1,\ldots,w_m$ of $W$ 
if
  $$d(x,w_i(x))\leq R \textrm{ for all } i\in \{1,\ldots,m\}.$$
\end{defn}

\begin{prop}
\label{p: s fixes point on q-geodesic}
Let $s\in S$. Suppose $u,w$ are vertices in $T$ and $x,y$ are vertices in $T_\sh$
so that $P(x)=u$, $P(y)=w$, and $s\in \Lab(u)\cap \Lab(w)$.
If $z\in [xy]_\sh$, then $z$ is $R$-fixed by $s$, where 
$R=2^8(|X|(20c\delta+12\delta)+4c\delta)$.
\end{prop}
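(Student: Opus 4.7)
The plan is to combine the convexity of the fixed-point set of the involution $s$ with the stability of quasi-geodesics (the Morse lemma) in the $\delta$-hyperbolic space $\H^n$. The idea is to exhibit a hyperbolic geodesic lying near $[xy]_\sh$ which is pointwise fixed by $s$, and then to transfer the resulting displacement bound to $[xy]_\sh$ via the quasi-geodesic stability.

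First I would extract fixed points of $s$ near the endpoints. By Definition \ref{d: newLab on T}, the hypothesis $s\in\Lab(u)$ yields an $x_0\in p^{-1}(u)\cap X$ with $s(x_0)=x_0$, and symmetrically a $y_0\in p^{-1}(w)\cap X$ with $s(y_0)=y_0$. Because $x$ is a vertex of $T_\sh$ with $P(x)=u$, either $x\in X\setminus\overline{X}$ (so $p(x)=u$) or $x=q_V(u)$; in the latter case $\Lab(u)\neq\emptyset$ forces $q_V(u)\in X$, and again $p(x)=u$. Hence $x$ and $x_0$ both lie in $p^{-1}(u)$, and the approximating-tree inequality (Definition \ref{d: GAT}) applied with $d_T(p(x),p(x_0))=0$ gives $d_\H(x,x_0)\leq 2c\delta$; therefore
\[
 d_\H(x,s(x))\leq d_\H(x,x_0)+d_\H(s(x_0),s(x))\leq 4c\delta,
\]
and symmetrically $d_\H(y,s(y))\leq 4c\delta$.

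Next, because $s$ is an isometric involution, its fixed-point set $F_s$ is totally geodesic, hence convex, so the segment $[x_0y_0]_\H\subset F_s$ is pointwise fixed by $s$. Applying convexity of the hyperbolic distance function (Corollary \ref{c: convexity corollary}) to $[xy]_\H$ and $[x_0y_0]_\H$ shows that every point of $[xy]_\H$ lies within $2c\delta$ of $F_s$, and is therefore moved at most $4c\delta$ by $s$. It remains to pass from $[xy]_\H$ to $[xy]_\sh$. The path $[xy]_\sh$ is the image under $q$ of the geodesic $[P(x)P(y)]\subset T$, possibly with two auxiliary segments of length at most $2c\delta$ attached at the endpoints (Definition \ref{d: Tsh}). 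By the remark following Proposition \ref{p: Tsh and T are q-i}, the map $q:T\to q(T)$ is a $(1,|X|(20c\delta+12\delta))$-quasi-isometry, so $[xy]_\sh$ is a $(1,K)$-quasi-geodesic in $\H^n$ with $K\leq |X|(20c\delta+12\delta)+4c\delta$. The Morse (stability) lemma in $\H^n$ then provides a constant $D=D(K,\delta)$ so that $[xy]_\sh$ is contained in the $D$-neighborhood of $[xy]_\H$. For any $z\in[xy]_\sh$, picking $w\in[xy]_\H$ with $d_\H(z,w)\leq D$ yields
\[
 d_\H(z,s(z))\leq 2d_\H(z,w)+d_\H(w,s(w))\leq 2D+4c\delta.
\]

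The main obstacle is sharpening this into the precise bound $R=2^8(|X|(20c\delta+12\delta)+4c\delta)$: one must invoke a quantitative version of the Morse lemma for $(1,K)$-quasi-geodesics in a $\delta$-hyperbolic space and track how the factor $2^8$ accumulates, presumably from the standard iterated-doubling proof of stability, keeping careful account of how $K$ and $\delta$ combine. The geometric ingredients from the rest of the proof---convexity of $F_s$, convexity of hyperbolic distance, and the estimate $d_\H(x,x_0)\leq 2c\delta$ from Definition \ref{d: GAT}---contribute only the $4c\delta$ term and the quasi-isometry constant inside $K$.
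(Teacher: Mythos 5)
Your proposal follows essentially the same route as the paper: show that $[xy]_\sh$ is a $(1,A)$-quasi-geodesic with $A=|X|(20c\delta+12\delta)+4c\delta$, invoke stability of quasi-geodesics to place it in a bounded neighbourhood of the hyperbolic geodesic $[xy]_\H$, observe that $s$ (essentially) fixes that geodesic, and finish with the triangle inequality. The one ingredient you leave unresolved --- the quantitative Morse constant --- is exactly what the paper supplies by quoting Lemma 3.43 of Kapovich's book: a $(1,A)$-quasi-geodesic in $\H^n$ lies in the $r$-neighbourhood of the geodesic with $r=2^7A$, whence the factor $2^8$ in $R$; so your ``main obstacle'' is a citation rather than a missing idea. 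There is one genuine difference: the paper's proof simply asserts that $s$ fixes $x$ and $y$ and hence fixes $[xy]_\H$ pointwise, which is how the proposition is actually applied (in Proposition \ref{p: old-new label system on Thyp} one chooses $x\in p^{-1}(u)$ and $y\in p^{-1}(w)$ to be points of $X$ genuinely fixed by $s$), and this is what makes the clean bound $2\cdot 2^7A=R$ come out. Your more cautious reading of the hypotheses --- extracting fixed points $x_0,y_0$ with $d_\H(x,x_0),d_\H(y,y_0)\le 2c\delta$ and using convexity --- is a legitimate repair of that imprecision, but it costs you the extra $+4c\delta$, so with the quantitative lemma your argument proves $d(z,s(z))\le 2^8A+4c\delta$ rather than the stated $R$; to recover $R$ exactly, either assume (as the paper's proof implicitly does, and as the application permits) that $x$ and $y$ are themselves fixed by $s$, or accept the marginally larger constant, which affects nothing downstream.
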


As a consequence of Proposition \ref{p: s fixes point on q-geodesic}, we
obtain:

\begin{prop}
\label{p: old-new label system on Thyp}
Suppose that $v$ is a vertex of $T$ and $s\in \newLab(v)$.
Then $q(v)$ is $R$-fixed by $s$, where $R$
is defined as in Proposition \ref{p: s fixes point on q-geodesic}.
\end{prop}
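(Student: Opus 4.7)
The plan is to reduce directly to Proposition \ref{p: s fixes point on q-geodesic} by unpacking the definition of the canonical extension. The key observation is that the geometric content is entirely carried by Proposition \ref{p: s fixes point on q-geodesic}; here we only need to translate the combinatorial hypothesis $s\in\newLab(v)$ into a statement about a geodesic in $T_\sh$ passing through $q(v)$.

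First I would invoke Definition \ref{d: old-new labelling system} (equivalently Lemma \ref{l: new implies old}): since $s\in\newLab(v)$, there exist vertices $a,b\in V(T)$ with $v\in[ab]$ and $s\in\Lab(a)\cap\Lab(b)$. This is the only place where the canonical extension enters, and it is purely combinatorial.

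Next I would pass to the shadow. Set $x=q(a)$, $y=q(b)$, and $z=q(v)$. By the construction of $q_V$ in Section \ref{s: shadow}, the assignment $q_V$ is a section of $P$ on vertices, so $P(x)=a$ and $P(y)=b$. Since $q$ sends each edge of $T$ to a geodesic segment in $\H^n$ via the dilation of Proposition \ref{p: Tsh and T are q-i}, the image $q([ab])$ is the concatenation of such segments and is exactly the path $[xy]_\sh\subset T_\sh$; in particular $z=q(v)\in[xy]_\sh$ because $v\in[ab]$.

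Finally I would apply Proposition \ref{p: s fixes point on q-geodesic} with $u=a$, $w=b$: its hypotheses $P(x)=u$, $P(y)=w$, $s\in\Lab(u)\cap\Lab(w)$, and $z\in[xy]_\sh$ are all satisfied, and its conclusion is precisely that $z=q(v)$ is $R$-fixed by $s$. No obstacle of substance arises: the reduction is bookkeeping between the abstract tree $T$ and its geometric shadow $T_\sh$, and the quantitative estimate $R=2^{8}(|X|(20c\delta+12\delta)+4c\delta)$ is inherited verbatim from the proposition we invoke.
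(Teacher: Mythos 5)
Your reduction is the same as the paper's: unpack $s\in\newLab(v)$ via Lemma \ref{l: new implies old} to obtain vertices $a,b$ with $v\in[ab]$ and $s\in\Lab(a)\cap\Lab(b)$, and then feed the resulting configuration into Proposition \ref{p: s fixes point on q-geodesic}. The one place you diverge is the choice of the points $x,y$: you take $x=q(a)$, $y=q(b)$. These do satisfy the hypotheses of Proposition \ref{p: s fixes point on q-geodesic} as literally stated (indeed $P(q_V(a))=a$, $P(q_V(b))=b$), but the proof of that proposition actually uses that $s$ fixes $x$ and $y$ --- this is what makes $s$ preserve the geodesic $\gamma=[xy]_\H$ --- and $q(a)=q_V(a)$ need not be fixed by $s$: when $p$ fails to be injective over $a$, the representative chosen for $\overline{X}$ may be a different element of $p^{-1}(a)\cap X$ than the one whose stabilizer contains $s$, and the constant $R=2^8\bigl(|X|(20c\delta+12\delta)+4c\delta\bigr)$ has essentially no slack to absorb the resulting error. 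The paper sidesteps this by choosing $x,y\in X$ with $p(x)=a$, $p(y)=b$ and $s\in\stab_S(x)\cap\stab_S(y)$; such points exist precisely because of how $\Lab$ is defined (Definition \ref{d: newLab on T}). With that choice the containment $q(v)\in[xy]_\sh$ still holds, since $[xy]_\sh$ is by definition routed through $q([p(x)p(y)])=q([ab])$, which contains $q(v)$ because $v\in[ab]$. So replace $x=q(a)$, $y=q(b)$ by these $s$-fixed preimages; with that one-line change your argument coincides with the paper's proof.
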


\begin{proof}
If $s\in \newLab(v)$, then by Lemma \ref{l: new implies old}, there are vertices
$u,w\in T$ and $x,y\in T_\sh$ 
such that $v\in [uw]$,  
$s\in \Lab(u)\cap \Lab(w)$, and $p(x)=u, p(y)=w$. 
 Since $x$ and $y$
are fixed by $s$, it follows from Proposition
\ref{p: s fixes point on q-geodesic} that the vertex $q(v)$ is $R$-fixed by $s$.
\end{proof}

\begin{proof}[Proof of Proposition \ref{p: s fixes point on q-geodesic}.]

We use the following lemma.

\begin{lemma}[{\cite[Lemma 3.43, pp. 48-49]{Kbook}}]  
\label{l: q-i Hausdorff distance}
Let $\gamma=[xy]_\H$ be a geodesic and $\widehat{\gamma}$
be an $(1,A) $-quasi-geodesic path from $x$ to $y$.  Then 
$\widehat{\gamma}$ is contained in a regular $r$-neighbourhood 
$N_{r}(\gamma),$
where $r=2^7A.$
\end{lemma}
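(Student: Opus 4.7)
The final statement is the Morse–stability lemma for $(1,A)$-quasi-geodesics in $\H^n$. Since the paper has just established that $\H^n$ is $\delta$-hyperbolic with $\delta=\ln 3$ (Proposition \ref{p: Hn is delta-hyperbolic}), my plan is to prove it by reducing to a Gromov-product estimate and then applying $\delta$-thinness of the triangle $[xy\widehat\gamma(t)]$. This avoids the exponential-divergence arguments needed for general $(L,A)$-quasi-geodesics and exploits the fact that $L=1$ forces the path length of $\widehat\gamma$ to exceed $d(x,y)$ by at most $A$.

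First I would reduce to the case where $\widehat\gamma\colon[0,L]\to\H^n$ is continuous and parametrized by arc length; for the rectifiable quasi-geodesic paths occurring in this paper (concatenations of hyperbolic segments, e.g.\ the shadow paths $[xy]_\sh$) this loses no generality. Under arc-length parametrization the upper bound $d(\widehat\gamma(s),\widehat\gamma(t))\le|s-t|$ is automatic, so the $(1,A)$-quasi-isometric condition reduces to $|s-t|-A\le d(\widehat\gamma(s),\widehat\gamma(t))$. Setting $s=0,\,t=L$ gives $L\le d(x,y)+A$; the path is at most $A$ longer than the straight geodesic $\gamma$.

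Next, fix any $z=\widehat\gamma(t)\in\widehat\gamma$. From $d(x,z)\le t$ and $d(z,y)\le L-t$ I obtain
\begin{equation*}
d(x,z)+d(z,y)\;\le\;L\;\le\;d(x,y)+A,
\end{equation*}
so the Gromov product $(x\mid y)_z=\frac{1}{2}\bigl(d(z,x)+d(z,y)-d(x,y)\bigr)$ is at most $A/2$. In the comparison tripod $\tau_\Delta$ of $\Delta=[xyz]$ (Definition \ref{d: comparison tripod}), the distance from the $z$-leaf to the branch point equals exactly $(x\mid y)_z\le A/2$; pulling back through $\chi_\Delta$ shows that the internal point $o_{xz}$ on the side $[xz]$ of the original triangle satisfies $d(z,o_{xz})\le A/2$.

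Finally, $\delta$-thinness of $\Delta$ with $\delta=\ln 3$ (Proposition \ref{p: Hn is delta-hyperbolic}) pairs $o_{xz}$ with the internal point $o_{xy}$ lying on the geodesic $\gamma=[xy]$ and forces $d(o_{xz},o_{xy})\le\ln 3$, so
\begin{equation*}
d(z,\gamma)\;\le\;d(z,o_{xy})\;\le\;d(z,o_{xz})+d(o_{xz},o_{xy})\;\le\;A/2+\ln 3,
\end{equation*}
which is comfortably below the claimed $r=2^7A$ (the generous constant $2^7$ appears to be an artifact of stating the lemma in a form uniform in $L$, which specializes crudely to $L=1$). The only genuine obstacle is the continuity hypothesis on $\widehat\gamma$: a purely set-theoretic $(1,A)$-quasi-isometric embedding need not be rectifiable, in which case arc-length reparametrization is unavailable and one must instead work with a sufficiently dense finite sampling $\{\widehat\gamma(t_i)\}$ and absorb the gaps into the additive constant. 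This is routine, but it is the one subtlety I would flag when writing the proof.
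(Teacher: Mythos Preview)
The paper does not prove this lemma at all: it is quoted verbatim from Kapovich's book \cite[Lemma 3.43]{Kbook} and used as a black box inside the proof of Proposition~\ref{p: s fixes point on q-geodesic}. So there is nothing to compare your argument against on the paper's side.

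Your argument itself is correct for continuous, arc-length parametrized $(1,A)$-quasi-geodesics, and the Gromov-product route you take is the natural one when $L=1$: the inequality $d(x,z)+d(z,y)\le L\le d(x,y)+A$ is exactly what makes the $L=1$ case so much softer than the general Morse lemma, and your use of the comparison tripod and $\delta$-thinness (Definition~\ref{d: d-thin}, Proposition~\ref{p: Hn is delta-hyperbolic}) is clean. You are also right that the paths to which the paper actually applies the lemma---the shadow paths $[xy]_\sh$, which are finite concatenations of hyperbolic segments---are rectifiable, so the arc-length reduction is available there. Two small caveats worth recording. First, your phrase ``reduce to the case'' hides a genuine step: reparametrizing an arbitrary $(1,A)$-quasi-geodesic by arc length need not preserve the lower inequality $|s-t|-A\le d(\widehat\gamma(s),\widehat\gamma(t))$ with the \emph{same} constant $A$; one really needs either that the path was arc-length parametrized to begin with (as in the paper's application) or a separate taming lemma producing a new additive constant. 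You flag this, but it is the reduction, not the discreteness issue, that carries the weight. Second, your bound $A/2+\ln 3$ is only $\le 2^7A$ once $A\ge (\ln 3)/127.5$; for the values of $A$ arising in the paper this is amply satisfied, but ``comfortably below'' is not literally true for all $A>0$.
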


Let $\gamma=[xy]_\H$ and
$\widehat{\gamma}=[xy]_\sh$. We claim that $\widehat{\gamma}$ is a $|X|(20c\delta+12\delta)+4c\delta$
quasi-geodesic.
To show this, let $P_\gamma:Q(X)\to T$ be a map
sending $\gamma$ to $[uw]$ satisfying the conditions of
Lemma \ref{l: refinement of a q-i relation}.
The map $P_\gamma$ is distance decreasing 
on elements of $X$.
Let $z_1$ and $z_2$ be the unique points of $\overline{X}$ 
such that $p(x)=p(z_1)$ and $p(y)=p(z_2)$.
Let $\widehat{\gamma}'$ be the segment of $\widehat{\gamma}$
contained in the image of $q$; it is a sequence of edges of $T_\sh$
connecting $z_1$ to $z_2$.  
Recall that when restricted to each edge, the map $q$ is a
$(1,20c\delta+12\delta)$-quasi-isometry which is a homeomorphism.
Thus, we can construct a $(1,|X|(20c\delta+12\delta))$-quasi-isometry $\alpha:\widehat{\gamma}'\to [uw]$ by defining $\alpha$ as $q^{-1}$ along
edges. Then $q\circ P_\gamma$ sends $\gamma$ to $\widehat{\gamma}'$,
so the length of $\widehat{\gamma}'$ is at most $d_\H(x,y)+|X|(20c\delta+12\delta)$.

By construction of $T_\sh$ (Definition \ref{d: Tsh}) and 
the definition of a Gromov approximating tree (Definition \ref{d: GAT},
Property 2),
points in $T_\sh$ not contained in $q(T)$ are at most $2c\delta$
away from $q(T)$. Since the shortest path between $x$ and $y$ is at least as long as $d_\H(x,y)$,
it follows that
  $$d_\H(x,y)\leq d_\sh(x,y)\leq d_\H(x,y)+|X|(20c\delta+12\delta)+4c\delta.$$
Set $A=|X|(20c\delta+12\delta)+4c\delta$.
Let $a$ be a point on $\widehat{\gamma}$.
Let $b$ be the
nearest point on $\gamma$ to $a$. 
The element $s$ fixes $x$ and $y$, so $s$ fixes $\gamma$
and sends $[ab]_\H$ to 
$[s(a)b]_\H$. The distance $d(a,b)=d(s(a),b)$ is bounded
above by $r=2^7A$ as a consequence of 
 Lemma \ref{l: q-i Hausdorff distance}.
It follows that $d(a,s(a))$ is bounded above by $R=2^8A$.

We have shown that all points of $\widehat{\gamma}$ are $R$-fixed
by $s$.  
\end{proof}

\subsection{Special splittings produced by Gromov approximating trees}
\label{s: special splittings from Thyp}
An edge $e$ of the Gromov approximating tree $T$
determines a special splitting in the following way.  
\begin{prop}
\label{prop: definition of special splitting}
Define 
sets of generators
    $$S^+(e)=\bigcup_{v\in T^+(e)} \{s\in \Lab(v)\},$$
    $$S^-(e)=\bigcup_{v\in T^-(e)} \{s\in \Lab(v)\},$$
    $$S^*(e)=S^+(e)\cap S^-(e).$$   

The special splitting
$\genby{S^+(e)}\ast_{\genby{S^*(e)}}\genby{S^-(e)},$
where the amalgamation maps are induced by the inclusions $S^*(e)\hookrightarrow S^{\pm}(e)$,
yields a group isomorphic to $W$.  Furthermore,
the splitting is trivial if and only if $e$ is
useless, i.e., either $S^+(e)=S$ 
or $S^-(e)=S$.
\end{prop}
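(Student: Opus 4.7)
The plan is to establish the isomorphism via visual amalgamation (Proposition~\ref{p: visual amalgamation of two groups}) and then deduce the triviality criterion directly from the definitions. Throughout, I interpret the labelling $\Lab$ appearing in Proposition~\ref{prop: definition of special splitting} as the generator labelling system $\newLab$ of Definition~\ref{d: newLab on T}, which is a labelling system by Theorem~\ref{t: Thyp labelling system}.

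For the isomorphism, let $\Gamma^\pm$ and $\Gamma^*$ denote the subdiagrams of $\Gamma(W,S)$ spanned by $S^\pm(e)$ and $S^*(e)$, respectively. I would check that $\Gamma(W,S)=\Gamma^+\cup_{\Gamma^*}\Gamma^-$ by verifying (a) $S^+(e)\cup S^-(e)=S$, and (b) no edge of $\Gamma(W,S)$ joins $S^+(e)\setminus S^*(e)$ to $S^-(e)\setminus S^*(e)$. Item (a) is immediate from Property~B of the labelling system applied to $\newLab$, combined with $\vx(T)=\vx(T^+(e))\cup\vx(T^-(e))$. For item (b), when $m_{ij}<\infty$ the pair $\{s_i,s_j\}$ lies in $\mathcal S$, so the construction of $X$ in Section~\ref{s: application} provides a common fixed point $x\in X$; the vertex $p(x)\in T$ then satisfies $\{s_i,s_j\}\subset\Lab(p(x))\subset\newLab(p(x))$. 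Since $p(x)$ lies on exactly one side of $e$, this forces either $s_j\in S^+(e)$ (when $p(x)\in T^+(e)$) or $s_i\in S^-(e)$ (when $p(x)\in T^-(e)$), placing one of the two generators in $S^*(e)$ and ruling out the straddling configuration. Proposition~\ref{p: visual amalgamation of two groups} then gives $W\cong\genby{S^+(e)}\ast_{\genby{S^*(e)}}\genby{S^-(e)}$, and because the amalgamation maps are induced by inclusion of standard generator subsets, the splitting is special in the sense of Definition~\ref{d: special splitting}.

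For the triviality claim, Definition~\ref{d: trivial splitting} says that the splitting is trivial iff one amalgamation map is an isomorphism, which for these inclusions of special subgroups is equivalent to $S^*(e)=S^+(e)$ or $S^*(e)=S^-(e)$. Using $S^*(e)=S^+(e)\cap S^-(e)$ together with $S^+(e)\cup S^-(e)=S$ from item (a), this simplifies to $S^-(e)=S$ or $S^+(e)=S$, which is exactly the definition of $e$ being useless (Definition~\ref{d: useful edge}).

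The main obstacle is item (b): showing that the labelling along $T$ faithfully detects the adjacency structure of $\Gamma(W,S)$. This depends crucially on the specific construction of $X$ from pairwise dihedral fixed points, which guarantees that every Coxeter-adjacent pair of generators admits a commonly-labelled vertex in $T$. Without this geometric input, a stray edge of $\Gamma(W,S)$ could bridge $S^+(e)\setminus S^*(e)$ and $S^-(e)\setminus S^*(e)$, in which case the visual amalgamation would fail to reproduce $\Gamma(W,S)$ and the amalgamated product would be a strict quotient of $W$ rather than $W$ itself.
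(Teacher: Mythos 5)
Your proposal is correct and takes essentially the same route as the paper: the paper likewise reduces to showing that the subdiagram spanned by $S^*(e)$ separates $\Gamma(W,S)$ (via visual amalgamation, Proposition~\ref{p: visual amalgamation of two groups} and Proposition~\ref{prop: special splittings separate Gamma}), with the identical key step that any pair $s_i,s_j$ with $m_{ij}<\infty$ lies in $\mathcal S$ and hence shares a fixed point in $X$ whose image vertex sits on one side of $e$, forcing one of the two generators into $S^*(e)$. The use of surjectivity of the labelling for $S^+(e)\cup S^-(e)=S$ and the definitional treatment of triviality also match the paper's argument.
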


\begin{proof}
Let $\Gamma$ denote the Coxeter diagram for the system $(W,S)$.
We let $\Gamma^*\subset\Gamma$ denote the subgraph
spanned by $S^*(e)$; we define $\Gamma^+$ and 
$\Gamma^-$ similarly.

By Proposition 
\ref{prop: special splittings separate Gamma}, it suffices to
show that $\Gamma^*$ separates $\Gamma$ and that 
$S\subset S^+(e)\cup S^-(e)$.

We first show that $\Gamma^*$ separates $\Gamma$.
Let $\gamma$ be a path from $\Gamma^+\setminus\Gamma^*$
to $\Gamma^-\setminus\Gamma^*$.  By way of contradiction,
suppose that $\gamma$ does not pass through $\Gamma^*$. 
Then $\gamma$ contains
vertices $s_+\in S^+(e)\setminus S^*(e)$ 
and $s_-\in S^-(e)\setminus S^*(e)$ that are connected
by exactly one edge in $\Gamma$.  Hence the group 
$\genby{s_+, s_-}$ is a finite dihedral group, and 
$\{s_+, s_-\}$ is an element of $\mathcal{S}$.
So $\{s_+, s_-\}$ is contained in either $S^+(e)$ 
or $S^-(e)$.    Without loss of generality, suppose
that $\{s_+, s_-\}\subset S^+(e).$  Then $s_-\in S^+(e)\cap S^-(e)=S^*(e)$.
This contradicts the assumption that $s_-\in S^-(e)\setminus S^*(e)$.

We have shown that $\genby{S^+(e)}\ast_{S^*(e)}\genby{S^-(e)}$
is a splitting.  To complete the proof, it remains to show that
$W\cong \genby{S^+(e)}\ast_{S^*(e)}\genby{S^-(e)}$.  
It suffices to check that 
$S^+(e)\cup S^-(e)=S$.
This follows from the surjectivity of $\newLab$. 
Hence an edge $e$ determines the desired splitting. 
\end{proof}
The splitting over $e$ may be trivial.
In Section \ref{s: lower bound on displacement function}, 
we find a lower bound on the displacement
function of a representation to ensure the existence of a nontrivial
splitting.

\subsection{Lower bound on the displacement function to guarantee existence of
nontrivial splittings}
\label{s: lower bound on displacement function}

\begin{prop}
\label{p: useless tree} 
If every edge of $T$ determines a trivial splitting of $W$,
then $\Tsh$ contains a point that is $R$-fixed by all generators in $S$.  Hence if $B_\rho > R$,
there exists a nontrivial splitting of $W$.
\end{prop}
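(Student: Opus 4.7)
The plan is to chain together the Topological Helly consequence proved in Proposition \ref{p: useless means full} with the geometric translation provided in Proposition \ref{p: old-new label system on Thyp}. Under the hypothesis, there are no useful edges with respect to the combinatorial labelling system; this will force the existence of a ``full'' vertex in the approximating tree, whose shadow will be the desired $R$-fixed point in $\Tsh$.

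First I would unwind the hypothesis. Assume every edge of $T$ determines a trivial splitting. By Proposition \ref{prop: definition of special splitting}, this means that for every edge $e$, either $S^+(e) = S$ or $S^-(e) = S$. Since $\Lab(v) \subseteq \newLab(v)$ for each vertex $v$, the analogous unions with $\newLab$ in place of $\Lab$ also exhaust $S$, so every edge of $T$ is useless in the sense of Definition \ref{d: useful edge} with respect to the labelling system $\newLab$. By Theorem \ref{t: Thyp labelling system}, $\newLab$ is indeed a labelling system, so I may apply Proposition \ref{p: useless means full} to the labelled tree $(T, \newLab)$: it produces a full vertex $z \in \vx(T)$, that is, a vertex with $\newLab(z) = S$.

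Next I would promote this combinatorial fullness to a geometric statement about the action. For each $s \in \newLab(z) = S$, Proposition \ref{p: old-new label system on Thyp} says that the shadow $q(z) \in q(T) \subseteq \Tsh$ satisfies $d(q(z), \rho(s) q(z)) \leq R$. Since $s$ ranges over all of $S$, the single point $q(z)$ is simultaneously $R$-fixed by every generator, which proves the first assertion.

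For the ``Hence'' clause I would argue by contrapositive. If every edge of $T$ determines a trivial splitting, the first part yields a point $x = q(z) \in \Tsh$ with $\max_{s \in S} d(x, \rho(s) x) \leq R$, whence $B_\rho \leq R$. Equivalently, $B_\rho > R$ forces at least one edge of $T$ to be useful, and Proposition \ref{prop: definition of special splitting} then exhibits a nontrivial special splitting of $W$. The proof is essentially an assembly of prior results, so there is no single hard step; the only delicate point is making sure the passage from ``trivial splitting'' (defined using $\Lab$) to ``useless edge in the labelling system $\newLab$'' runs in the correct direction, which is guaranteed by the inclusion $\Lab \subseteq \newLab$.
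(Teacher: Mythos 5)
Your proposal is correct and follows essentially the same route as the paper's proof: all edges trivial implies all edges useless, Proposition \ref{p: useless means full} then yields a full vertex, Proposition \ref{p: old-new label system on Thyp} makes its shadow $R$-fixed by $S$, and the contrapositive gives the nontrivial splitting when $B_\rho > R$. Your extra care in passing from $\Lab$ to $\newLab$ via the inclusion $\Lab(v)\subset\newLab(v)$ is a welcome clarification of a step the paper leaves implicit, but it is not a different argument.
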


\begin{proof}
Suppose that every edge of $T$ determines a trivial splitting.
Then every edge of $T$ is useless.
By Proposition \ref{p: useless means full},
there exists a vertex $v\in T$ 
labelled by the full set $S=\{s_1,\dots,s_k\}$.  By Proposition
\ref{p: old-new label system on Thyp}, 
the point $q(v)\in V(T_\sh)$ is $R$-fixed by $S$.

If a point in $\H^n$ is $R$-fixed by $S$, then $B_\rho\leq R$.
This follows from
the definition of the displacement function and 
the definition of an $R$-fixed point.
This means that when $B_\rho > R$, there exists an edge $e$ 
which is useful and thus determines a nontrivial splitting
of $W$.
\end{proof}

By Proposition \ref{prop: useful subtree}, the union of edges of $T$ 
which determine non-trivial splittings of $W$ is connected.  
\begin{defn}
\label{defn: useful subtree}
Let $\Tspl$ denote
the subtree consisting of the union of useful edges and call this subtree
the \textit{useful subtree}.
\end{defn}
\begin{thm}[{{\bf Useful subtree theorem}}]
\label{t: useful subtree}
When $B_\rho >R$, the useful subtree $\Tspl$ is nonempty.
\end{thm}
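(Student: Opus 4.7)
The plan is to deduce the theorem essentially for free from Proposition \ref{p: useless tree} together with Proposition \ref{prop: useful subtree}; once those are in hand, the statement is a short bookkeeping step. The two ingredients give, respectively, the existence of at least one useful edge when $B_\rho > R$, and the fact that the useful edges of $T$ assemble into a subtree.

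First I would argue, by contrapositive, that some edge of $T$ must be useful when $B_\rho > R$. Suppose, toward a contradiction, that every edge is useless. Applying Proposition \ref{p: useless means full} (the labelling-system form of the Topological Helly Theorem) to the generator labelling system $\newLab$ of Definition \ref{d: newLab on T} produces a vertex $v \in T$ with $\newLab(v) = S$. Proposition \ref{p: old-new label system on Thyp} then shows that the shadow $q(v) \in \Tsh \subset \H^n$ is $R$-fixed by every generator in $S$; that is, $d(q(v), \rho(s)q(v)) \leq R$ for every $s \in S$. By the definitions of $R$-fixedness and of the displacement function, this forces $B_\rho(q(v)) \leq R$ and hence $B_\rho \leq R$, contradicting the hypothesis. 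Therefore at least one edge of $T$ is useful. I would then invoke Proposition \ref{prop: useful subtree} to conclude that the set of useful edges forms a subtree, and combining this with the existence of a useful edge shows that $\Tspl$ is nonempty, as claimed.

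I do not anticipate a genuine obstacle at this final step: the real work has already been carried out upstream. In particular, the passage from the combinatorial statement that $s \in \newLab(v)$ to the geometric statement that $q(v)$ is $R$-fixed by $\rho(s)$ rests on the quasi-isometries between $Q(X)$, $T$, and $\Tsh$ developed in Sections \ref{s: qc hull} and \ref{s: shadow}, together with the Helly-type argument of Proposition \ref{p: useless means full}; and the constant $R = 2^8(|X|(20c\delta + 12\delta) + 4c\delta)$ is precisely what those inputs produce. With all of this in place, the Useful Subtree Theorem is immediate.
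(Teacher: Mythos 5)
Your proposal is correct and follows essentially the same route as the paper: the paper's proof simply cites Proposition \ref{p: useless tree} (whose proof is exactly your contradiction argument via the full vertex from Proposition \ref{p: useless means full} and the $R$-fixedness from Proposition \ref{p: old-new label system on Thyp}) to get one useful edge, hence $\Tspl\neq\emptyset$. Your additional appeal to Proposition \ref{prop: useful subtree} is harmless but not needed for nonemptiness.
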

\begin{proof}
By Proposition \ref{p: useless tree}, there exists a
useful edge $e$ of $T$.  
Hence $\Tspl$ is nonempty.
\end{proof}

\section{Small decompositions}
\label{s: small decompositions}

In this section, we prove the main result of the paper.

Let $\mu_n$ be the Kazhdan-Margulis constant for $\H^n$ (Theorem
\ref{t: K-M}).

\begin{ectcg}
Let $(W,S)$ be a Coxeter system and suppose $S$ has $k$ elements.
There exists a function $C_n(k)$ with the property that either $W$
has a small special nontrivial splitting or the displacement function
is bounded above by $C_n(k)$ for every discrete and faithful $W$-action
on $\H^n$.   We may take
   $$C_n(k)=R+2\binom{k}{2}\Lambda(\mu_n,R),$$
where $R=2^8(\binom{k}{2}(20c\delta+12\delta)+4c\delta)$,
and $\Lambda(\mu_n,R)=2\binom{k}{2}(\frac{4}{\mu_n}+2R)$ (as defined in Proposition \ref{p: epsilon-R estimate}).

As a function of $k$, the estimate $C_n(k)$ is of order $k^4$.
\end{ectcg}

\begin{defn}
We say that the {\it shadow length} of an edge $e$ in $T$
is the length of the quasi-geodesic segment $q(e)\subset \Tsh\subset \H^n$.
We let $|q(e)|$ denote the shadow length of $e$.
(See Proposition \ref{p: Tsh and T are q-i} for the definition of the map $q$.)
\end{defn}

We seek a condition on the shadow length of a useful edge $e$ 
that guarantees the existence of a small nontrivial splitting of $W$
(Definition \ref{d: small group}).

Let $\Lambda_n=\Lambda(\mu_n,R)$, where $R$ is defined as in Proposition
\ref{p: old-new label system on Thyp}.  Given an action $\rho: W\curvearrowright \H^n$ and
an edge $e$ of $T$, we define $S^*(e)$ as in Section 
\ref{s: special splittings from Thyp}.

\begin{lemma}
\label{c: e estimate}
If $|q(e)|\geq \Lambda_n,$ then  
then $S^*(e)$ generates a small group and
the special splitting associated with $e$ is small. 
\end{lemma}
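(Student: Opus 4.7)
The plan is to combine Proposition \ref{p: epsilon-R estimate} with the Kazhdan-Margulis Theorem. I will show that when $|q(e)| \geq \Lambda_n = \Lambda(\mu_n, R)$, every involution $s \in S^*(e)$ moves the midpoint $m$ of $q(e)$ by at most $\mu_n$. Since $m$ is a single point that works uniformly for every $s \in S^*(e)$, Theorem \ref{t: K-M} (applied to the discrete subgroup $\rho(\langle S^*(e)\rangle) \leq \rho(W)$) then certifies that $\langle S^*(e)\rangle$ is small, whence the special splitting produced by Proposition \ref{prop: definition of special splitting} is a splitting over a small subgroup.

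The crux is to translate membership in $S^*(e) = S^+(e) \cap S^-(e)$ into a hyperbolic displacement bound at the two endpoints of $e$. Fix $s \in S^*(e)$; by definition there exist vertices $v^+ \in T^+(e)$ and $v^- \in T^-(e)$ with $s \in \newLab(v^+) \cap \newLab(v^-)$. The path $[v^+ v^-] \subset T$ crosses $e$, and the connectedness property of the labelling system $\newLab$ (Lemma \ref{lemma: combinatorial label}) forces $s$ to label every vertex along this path, in particular the two endpoints $a, b$ of $e$. Proposition \ref{p: old-new label system on Thyp} then yields
\[
d(q(a), s\cdot q(a)), \quad d(q(b), s\cdot q(b)) \;\leq\; R.
\]

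Now $q(e)$ is a hyperbolic geodesic segment from $q(a)$ to $q(b)$ of length at least $\Lambda(\mu_n, R)$, by the edgewise-dilation construction of $q$ in Proposition \ref{p: Tsh and T are q-i}, and $s$ is an isometric involution of $\H^n$ moving both endpoints at most $R$. Proposition \ref{p: epsilon-R estimate} thus gives $d(m, s(m)) \leq \mu_n$ for the midpoint $m$ of $q(e)$, and the argument completes as described above. The main point requiring genuine care is the propagation step in the middle paragraph: translating the purely combinatorial datum $s \in S^*(e)$ into a uniform displacement bound at the shadow endpoints of $e$ itself, rather than at arbitrary vertices of $T$. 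Once Property A of the labelling system is invoked, the remainder is a direct invocation of previously established estimates.
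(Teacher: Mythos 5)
Your proof is correct and follows essentially the same route as the paper: bound the displacement of the endpoints $q(a)$, $q(b)$ by $R$ via Proposition \ref{p: old-new label system on Thyp}, apply Proposition \ref{p: epsilon-R estimate} to the midpoint of $q(e)$, and conclude smallness with the Kazhdan--Margulis Theorem and faithfulness of $\rho$. The only difference is that you spell out the propagation step (that $s\in S^*(e)$ forces $s$ to label the endpoints of $e$ via Property A / the canonical extension), which the paper leaves implicit.
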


\begin{proof}
Let $m$ be the midpoint of $q(e)$.  We show that $m$ is
$\mu_n$-fixed by $S^*(e)$.   According to Proposition  \ref{p: old-new label system on Thyp},
if $s\in S^*(e)$, then the shadow of each vertex in $e$ is $R$-fixed by $s$.  
Therefore, since 
$|q(e)|\geq \Lambda_n$, it follows from Proposition \ref{p: epsilon-R estimate}
that $d(m,s(m))\leq \mu_n$ 
for all  $s\in S^*(e)$.
The representation $\rho(\genby{S^*(e)})\subset \Isom(\H^n)$ is discrete,
so by the Kazhdan-Margulis Lemma (Theorem \ref{t: K-M}), the group $\rho(\genby{S^*(e)})$ is small.  
Since $\rho$ is a faithful representation, we conclude that $S^*(e)$ generates a small group.
\end{proof}

\begin{lemma}
\label{l: Tspl edge length bound}
There exists a function $C_n(k)$ with the following property.  
If the shadow length of every edge in $\Tspl$ 
is less than or equal to $\Lambda_n$, then each $x\in q(\Tspl)$ is $C_n(k)$-fixed by $S$.
Furthermore, we may take $$C_n(k)=R+2\binom{k}{2}\Lambda_n=2^8\left(\binom{k}{2}(20c\delta+12\delta)+4c\delta\right)+2\binom{k}{2} \Lambda_n.$$
\end{lemma}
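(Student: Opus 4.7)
The plan is to bound the displacement $d(x,s(x))$ for each $s\in S$ by first locating a nearby shadow vertex that is $R$-fixed by $s$, and then invoking the triangle inequality. I would fix $x\in q(\Tspl)$, write $x=q(y)$ for some $y\in\Tspl$, and let $s\in S$ be arbitrary. By Proposition \ref{p: Labspl}, the restricted labelling $\Labspl$ on $\Tspl$ is surjective, so there exists a vertex $v_s\in\Tspl$ with $s\in\newLab(v_s)$. Proposition \ref{p: old-new label system on Thyp} then yields $d(q(v_s),s(q(v_s)))\leq R$.

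Next I would control $d_\H(x,q(v_s))$ by traversing the unique tree-path from $y$ to $v_s$ inside $\Tspl$. This path is a concatenation of edges $e_1,\dots,e_m$; because $q$ sends each $e_i$ to a geodesic segment of $\H^n$ of length $|q(e_i)|$ (Proposition \ref{p: Tsh and T are q-i}), together with the standing hypothesis $|q(e_i)|\leq \Lambda_n$, the piecewise-geodesic chain yields
$$d_\H(x,q(v_s))\ \leq\ d_\sh(x,q(v_s))\ \leq\ \sum_{i=1}^{m} |q(e_i)|\ \leq\ m\,\Lambda_n.$$
A combinatorial count then bounds $m$ in terms of $k$: since the Gromov approximating tree $T$ has at most $|X|\leq \binom{k}{2}$ leaves and, by construction, no internal vertex of valence two, a standard argument (each internal vertex of a longest path in $T$ must carry an additional branch contributing at least one distinct leaf) gives the edge-diameter bound $m\leq |X|-1\leq \binom{k}{2}$. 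Since $s$ is an isometry of $\H^n$, the triangle inequality then yields
$$d(x,s(x))\ \leq\ 2\,d_\H(x,q(v_s))+d(q(v_s),s(q(v_s)))\ \leq\ 2\binom{k}{2}\Lambda_n+R\ =\ C_n(k),$$
uniformly in $s\in S$, so $x$ is $C_n(k)$-fixed by $S$.

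The main obstacle is the combinatorial bound $m\leq \binom{k}{2}$ on the path length. It requires exploiting the structural properties of Gromov's approximating tree (the leaf count bounded by $|X|$ and the absence of valence-two internal vertices) to keep the diameter of $T$ in edges linear in $|X|$ rather than quadratic. This $\binom{k}{2}$ factor, combined with the already $\binom{k}{2}$-dependent constant $R$ appearing inside $\Lambda_n=\frac{4}{\mu_n}+2R$, is what produces the $O(k^4)$ asymptotics of $C_n(k)$ stated in the main theorem.
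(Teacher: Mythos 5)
Your proof follows the paper's argument essentially verbatim: use surjectivity of $\Labspl$ (Proposition \ref{p: Labspl}) to find a vertex of $\Tspl$ labelled by $s$, apply Proposition \ref{p: old-new label system on Thyp} to get $R$-fixedness of its shadow, bound $d(x,q(v_s))$ by (number of edges in the tree path)$\cdot\Lambda_n$, and conclude with the triangle inequality, with the only addition being that you justify the edge-count bound the paper merely asserts. One small caveat: your justification claims $T$ has no internal valence-two vertices "by construction", which is not guaranteed (a point of $p(X)$ may be an interior vertex of $T$), but the bound survives because every valence-two vertex lies in $p(X)$, so counting internal $p(X)$-vertices together with the extra leaves produced by branch points still gives $m\leq |X|-1\leq\binom{k}{2}$.
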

\begin{proof}
If  $\Tspl$ is empty, then the lemma holds trivially, so
we work with nonempty $\Tspl$.

Let $x$ be a point in  $q(\Tspl)$, and let $s_i$ be an element of $S$.  
We analyse how far $s_i$
displaces $x$.  

By Proposition \ref{p: Labspl}, there is a vertex $v_i$
of $\Tspl$ with $s_i\in\newLab(v_i)$.
Since $|q(e)|\leq \Lambda_n$ for each edge $e$  of $\Tspl$, 
the distance $d(x,q(v_i))$ is bounded
above by $E\Lambda_n$, where $E$ is maximum number of edges in
a geodesic path in $\Tspl$.   

Set $R=26c\delta+12\delta$.
The distance between $q(v_i)$ and $s_i(q(v_i))$ is bounded 
above by $R$, by Proposition \ref{p: old-new label system on Thyp}.
It follows that 
$$d(x,s_i(x))\leq 2d(x,q(v_i))+R\leq R+2E\Lambda_n.$$
Since the value of $E$ is bounded above by $\binom{k}{2}$, we conclude that $x$ is $C_n(k)$-fixed by $S$, where 
$C_n(k)=R+2\binom{k}{2}\Lambda_n.$
\end{proof}

It follows that if all edges of $\Tspl$ have length less than or equal
to $\Lambda_n$, then
$B_\rho\leq C_n(k)$.

\begin{proof}[Proof of the Effective Compactness Theorem for Coxeter Groups]
Let $C_n(k)$ be the function defined in Lemma \ref{l: Tspl edge length bound}, and 
suppose that $B_\rho > C_n(k).$
Then $\H^n$ contains no point
$C_n(k)$-fixed by $S$.  
The useful subtree theorem (Theorem \ref{t: useful subtree}) 
guarantees that $\Tspl$ is nonempty, as $B_\rho>C_n(k)>R$.
Since $\Tspl$ is nonempty,
Lemma \ref{l: Tspl edge length bound} guarantees the existence
of an edge $e$ of $\Tspl$ whose shadow length is greater than $\Lambda_n$.
By Proposition \ref{prop: definition of special splitting}, 
the special splitting determined by $S^*(e)$
is nontrivial.  By Lemma \ref{c: e estimate}, this splitting is small.

Thus either $W$ admits a small special nontrivial splitting or 
$B_\rho\le C_n(k)$ for every $[\rho]\in \D(W,n)$.
\end{proof}


\end{document}